\newcommand{\R}{\mathbb{R}}
\newcommand{\PP}{\mathbb{P}}
\newcommand{\e}{\varepsilon}
\newcommand{\D}{\Delta}
\newcommand{\N}{\mathbb{N}}
\newcommand{\dd}{{\rm d}}
\newcommand{\xen}{X^{\varepsilon,N}}
\renewcommand{\yen}{Y^{\varepsilon,N}}
\newcommand{\xbn}{\overline{X}^{N}}
\newcommand{\ddt}{\frac{\rm d}{{\rm d}t}}
\newtheorem{assumption}{Assumption}
\newtheorem{proposition}{Proposition}
\newtheorem{theorem}{Theorem}
\newtheorem{corollary}{Corollary}
\title{Spatio-temporal averaging for a class of hybrid systems and application to conductance-based neuron models}
\author{Alexandre Genadot\footnote{Institut de Math\'ematiques de Bordeaux UMR 5251, Universit\'e de Bordeaux, 351, cours de la Lib\'eration, 33 405 Talence, France. Electronic address: Alexandre.Genadot@math.u-bordeaux1.fr}}
\date{}
\begin{document}

\maketitle

\begin{abstract}
We obtain a limit theorem endowed with quantitative estimates for a general class of infinite dimensional hybrid processes with intrinsically two different time scales and including a population. As an application, we consider a large class of conductance-based neuron models describing the nerve impulse propagation along a neural cell at the scales of ion channels.
\end{abstract}

\section{Introduction} 

This article aims to study a class of infinite dimensional hybrid processes $(Z^{\e,N}_t,t\geq0)$ with intrinsically two different time scales, this fact being emphasized by the presence of the parameter $\e$, and including a population whose size's parameter is $N$. We show the convergence of these models to a limit model when $\e$ and $N$ go to zero and infinity, respectively, and with a certain relative speed. This corresponds to consider an infinite population whose fast component is infinitely accelerated. Our main motivation for studying such a situation comes from mathematical neurosciences and more specifically from conductance-based neuron models describing the generation and propagation of a nerve impulse along a neural cell at the scale of ion channels. Our main result stands in this case a sufficient condition under which the used of reduced (or averaged) conductance-based neuron models is justified.\\

Hybrid systems, combining discrete and continuous dynamics, have been actively studied in applied mathematics. As highlighted in the recent review \cite{HP14}, they arise in a great variety of applications from bio-physiology through, as already mentioned, the modeling of excitable cells (neural or cardiac cells) \cite{SB09}, or the description of molecular motors \cite{H01, F10}, to the development of cyber-physical systems \cite{BBCP13,DLV12}, that is systems that interact tightly with the physical world and human operators such as in air traffic control \cite{RG04}, to mention just a few.\\

The hybrid processes that we consider can be described as follows. They have two distinct components, $Z^{\e,N}=(X^{\e,N},Y^{\e,N})$, one continuous $(X^{\e,N})$ and the other one of pure jumps $(Y^{\e,N})$. The continuous component is often referred as the macroscopic one in neural modeling whereas the term microscopic stands for the component of pure jumps. Between two jumps of this discrete component, the continuous component evolves according to an abstract evolution equation. When the jump component updates its current value, the parameters of the evolution equation are also updated; and so on. The evolution that we just described is that of a piecewise deterministic process. We will work more specifically in the context of Piecewise Deterministic Markov Process (PDMP). The study of such hybrid processes was carried out comprehensively in the works \cite{Da93,Da84} concerning the finite dimension, that is when the evolution equation for the continuous component is an ordinary differential equation (ODE). In the case where the ODE becomes a partial differential equation (PDE), and more generally in our setting an abstract evolution equation, the theory has been established in \cite{BR11}.\\

In recent years, these processes have been widely studied in terms of averaging and law of large numbers. That is, we consider either that the microscopic component evolves faster than the macroscopic component, or that the size of the internal population of the model goes to infinity. On the one hand, regarding the class of processes that we consider, limit theorems when $N$ goes to infinity such as the law of large numbers and the central limit theorem have been obtained in \cite{Au08,BR13,RTW12,TR13}. On the other hand, the study of the averaging and the associated fluctuations when $\e$ goes to zero was carried out in \cite{GT12,GT14}. Our goal in the present paper is to reconcile the two approaches. Considering a general class of PDMP's with intrinsically two time-scale and a size parameter, we obtain a result of law of large number type with an explicit rate of convergence, for the joint convergence in $(\e,N)$. The mathematical tools we use in the proof belong to the singular perturbation theory throughout the study of a Poisson equation, and to general probabilities through the study of a martingale problem. Regarding the results previously obtained for this kind of models, and especially the cited averaging results, the main novelty comes from obtaining quantitative bounds for convergence thanks to the use of exponential inequalities for martingales.  These quantitative estimates are required to infer a criterion on the relative speed between the time-scale parameter $\e$ and the population size $N$, allowing joint convergence.\\

Our main motivation is the study of hybrid models describing the generation and propagation of a nerve impulse along a neuron. Indeed, this action potential evolves along the nerve fiber (or axon) according to a partial differential equations whose parameters are updated based on the current state of the ion channels present all along the axon. These ion channels allow ion exchange between the inside and outside of the cell. They are located all along the axon in discrete sites, forming a population of size $N$. Their mechanism of opening and closing, depending on the local action potential of the cell membrane,  is responsible for the generation and propagation of nerve impulses (on these questions, see the comprehensive book \cite{Hi01}). It has been shown that certain ionic channels evolve more rapidly than others, and it is now common to include a small parameter $\e$ in the model to account for this characteristic. Therefore, these neuron models fall naturally into the class of PDMP's that we propose to study. One challenge in the modeling of phenomena including multiple time and space scale, is to obtained reduced models easier to handle, both analytically and numerically. The reduced models are then used as appropriate approximate models. It is then quite important to know when such approximations are valid. Through our study, we obtain a sufficient condition under which the neuron model with the two parameters $\e$ and $N$ converges to a limit as these parameters go respectively to zero and infinity.  This condition involves the relative speed of convergence between the two different scales $\e$ and $N$. Heuristically, the result is as follows: somehow, it is enough that the ion channels evolve faster than the population grows. \\

The plan of the paper is the following. In Section \ref{sec_res}, we define the model of interest and state our main results. Section \ref{sec_appli} is devoted to the application of our results to conductance-based neuron models, our first motivation. The proof of our main results is presented in Section \ref{sec_proof}. 

\section{Presentation of the model and main results}\label{sec_res}

\noindent Let $H$ be a separable Hilbert space and $H^*$ its dual. We denote by $(\cdot,\cdot)$ the inner product on $H$ and by $\|\cdot\|$ the associated norm. The duality bracket between $H$ and $H^*$ is denoted by $<\cdot,\cdot>$. We define a sequence $(E_N)_{N\in\N}$ of finite spaces with increasing cardinality such that $E_N\subset E_{N+1}$ according to the canonical injection\footnote{The canonical injection is given by $\iota : i\in E_N\mapsto (i,\bold{0})\in E_{N+1}$ where $\bold{0}$ is the zero of $\R^{|E|_{N+1}-|E|_N}$.}. For any $x\in H$ we define an intensity matrix $Q^N(x)=(Q^N_{y_1y_2}(x))_{(y_1,y_2)\in E_N\times E_N}$.
\begin{assumption}\label{ass_Q}
We assume that for each $x\in H$, there is a unique quasi-stationary probability measure $\mu_N(x)$ on $E_N$ associated to $Q^N(x)$ such that
\[
\mu_N(x)Q^N(x)=0.
\]
Moreover, we assume that the jump rates are all uniformly bounded:
\begin{equation}
\exists [Q]\in\R_+\,\,\forall N\in\N\,\,\forall(y_1,y_2,x)\in E_N\times E_N\times H\quad |Q^N_{y_1y_2}(x)|\leq [Q]. 
\end{equation}
\end{assumption}
\noindent The term quasi-stationary refers to the fact that the stationary measure $\mu_N(x)$ actually depends on the external variable $x$. We will use this intensity matrix to define the jumping component of the studied process. For example, in the neuron model described in Section \ref{sec_mod}, the intensity matrix $Q^N(x)$ give the rate at which ionic channels change of state (roughly speaking, open and close) at a given potential $x$. The boundedness property of these rates as well as the existence of a quasi-stationary distribution are then naturally satisfied for such models.  Before stating the equations of our process, we need to define a linear self-adjoint operator $A$ on $H$ with domain $\mathcal{D}(A)$ continuously and densely embedded in $H$. For any $N\in\N$, we also define a reaction term $F^N$ on $H\times E_N$. Our assumptions on $A$ and $F^N$ are gathered below.
\begin{assumption}\label{ass_mac}
The linear operator $A$, with domain $\mathcal{D}(A)$ continuously and densely embedded in $H$, is dissipative in the sense that
\begin{equation}\label{A_d}
\exists [A]_d\in \R^*_+\,\,\forall x\in H\quad <Ax,x>\leq -[A]_d\|x\|^2.
\end{equation}
For any $N\in\N$, the reaction term $F^N$ defined on $H$ satisfies the growth condition
\begin{equation}\label{F_d}
\exists [F]_d\in \R^*_+\,\,\forall N\in\N\,\,\forall (x_1,x_2,y)\in H\times H\times E_N\quad (F^N(x_1,y)-F^N(x_2,y),x_1-x_2)\leq [F]_d\|x_1-x_2\|^2.
\end{equation}
Moreover the value at zero is bounded such that
\begin{equation}\label{F_0}
\exists[F]_0\in\R^*_+\,\,\forall N\in\N\,\,\forall y\in E_N\quad\|F^N(0,y)\|\leq [F]_0.
\end{equation}
\end{assumption}
\noindent Let $(\e,N)\in(0,1)\times\N$. The process $(\xen,\yen)$ we consider satisfies,
\begin{itemize}
\item[a)] the following abstract evolution equation on the Hilbert space $H$ for the so-called macroscopic component $\xen$
\begin{equation}\label{X_eN}
\partial_t \xen=A\xen+F^N(\xen,\yen),
\end{equation}
\item[b)] which is fully-coupled with the following jump evolution for the so-called microscopic component $\yen$
\begin{equation}
\PP(\yen_{t+h}=y_2|\yen_t=y_1)=\frac{1}{\e}Q^N_{y_1y_2}(\xen_t)h+{\rm o}\left(\frac{h}{\e}\right),
\end{equation}
for any $y_1\neq y_2$ in $E_N$.
\end{itemize}
We endow these equations with the initial conditions $\xen_0\in H$ and $\yen_0\in E_N$ which may be random.  For the neuron model of Section \ref{sec_mod}, the operator $A$ will correspond to the Laplacian operator, which satisfies Assumption \ref{ass_mac}, taking account for the propagation of the nerve impulse along the neural cell once it is generated thanks to the reaction term $F^N$. The assumption about the growth condition of this reaction term is generally satisfied thanks to the locally Lipschitz property of this term in applications.\\
Notice that if $\xen$ is frozen to the value $\xen=x$, then $\yen$ follows the dynamic of an usual continuous time Markov chain. See \cite{YQ98} for a comprehensive study of continuous time Markov chain with fast transition rates. Under Assumptions \ref{ass_Q} and \ref{ass_mac}, we can show that for any fixed $(\e,N)$, the couple $(\xen,\yen)$ is a Hilbert-valued Piecewise Deterministic Markov Process in the sense of \cite{BR11}. We denote by $\mathcal{D}(\mathcal{A}^{\e,N})$ the domain of its extended generator. For $N\in\N$, let $\phi: H\times E_N\to\R$ be a bounded measurable function whose Fr\'echet derivative $\frac{\dd\phi}{\dd x}(x,y)\in H^*$ may be represented by an element $\phi_x(x,y)$ of $H$ for all $(x,y)\in H\times E_N$. Then $\phi$ is in $\mathcal{D}(\mathcal{A}^{\e,N})$ and in this case we have
\begin{equation}
\mathcal{A}^{\e,N}\phi(x,y)=<Ax+F^N(x,y),\phi_x(x,y)>+\frac{1}{\e}Q^N(x)\phi(x,\cdot)(y).
\end{equation}
We refer to \cite{BR11}, Theorem 4 and Section A.3 for more details about the extended generator of such processes. Our aim is to study the behavior of the process $\xen$ when $(\e,N)\to(0,\infty)$. For this purpose, we introduce the averaged version of $\xen$ when $N$ is held fixed but $\e$ goes to zero. We denote this process by $\xbn$. When $\e$ goes to zero, we expect that the process $\yen$ reaches instantaneously its stationary behavior such that $\xbn$ is solution of the abstract evolution equation:
\begin{equation}\label{X_barN}
\partial_t \xbn=A\xbn+\overline{F}^N(\xbn),
\end{equation}
where the map $\overline{F}^N$ is defined as the average of the reaction term $F^N$ against the averaging measure $\mu_N$:
\begin{equation}
\overline{F}^N(x)=\int_{E_N}F^N(x,y)\mu_N(x)(\dd y).
\end{equation}
Let us notice the trivial but important fact that $\xbn$ is a deterministic process: the stochastic jumps of $\yen$ disappeared at the limit. Such an averaging result have been obtain for example in \cite{GT12}.  Our aim is to quantify the rate of convergence of $\xen$ towards $\xbn$ in order to obtain a criterion for the joint convergence in $(\e,N)$. Our assumptions on the averaged reaction term $\overline{F}^N$ are gathered below.
\begin{assumption}\label{ass_mac2}
The reaction term $\overline{F}^N$ satisfies the growth condition:
\begin{equation}\label{bar_F_d}
\exists [\overline{F}]_d\in \R\,\,\forall (x_1,x_2,N)\in H\times H\times\N\quad (\overline{F}^N(x_1)-\overline{F}^N(x_2),x_1-x_2)\leq [\overline{F}]_d\|x_1-x_2\|^2.
\end{equation}
Moreover, it is uniformly bounded at zero: 
\begin{equation}\label{F_0}
\exists[\overline{F}]_0\in\R_+\quad\forall y\in E\quad\|\overline{F}^N(0)\|\leq [\overline{F}]_0.
\end{equation}
\end{assumption}
\noindent In applications, the growth condition (\ref{bar_F_d}) is often satisfied thanks to the fact that the quasi-stationary measure $\mu_N(x)$ satisfies in $x$ a similar growth-condition. For example, for conductance-based neuron models, for $x\in H$, $\mu_N(x)=\sum_{y\in E_N}\mu_N(x)(\{y\})\delta_y$ is such that for $y\in E_N$, the mass $\mu_N(x)(\{y\})$ is Lipschitz in $x$. Note that Assumptions \ref{ass_mac} and \ref{ass_mac2} are quite classical to ensure the existence of a unique solution to reaction-diffusion equations, see \cite{RR04}. For example, it is quite easy to deduce \textit{a priori} estimates from these assumptions. For any constant $C$, the centered closed ball with radius $C$ is denoted by $B[0,C]$. From now on, $T$ denotes a finite time horizon.
\begin{proposition}\label{prop_bound}
There exists a constant $[X]_b$ depending on $T$ but otherwise not on $\e$ and $N$ such that, $\PP$-a.s,
\[
\forall t\in[0,T]\quad \xen,\xbn\in B[0,[X]_b].
\]
\end{proposition}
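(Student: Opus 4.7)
The plan is to establish the bound by a pathwise energy estimate using the dissipativity of $A$ (Assumption \ref{ass_mac}) together with the one-sided growth conditions on the reaction terms. Between successive jump times of $\yen$, the microscopic component is constant at some $y\in E_N$, so $\xen$ satisfies the deterministic abstract evolution $\partial_t\xen = A\xen+F^N(\xen,y)$. Formally pairing with $\xen$ and using the self-adjointness of $A$ gives
\begin{equation*}
\tfrac{1}{2}\ddt\|\xen\|^2 = <A\xen,\xen>+(F^N(\xen,y),\xen).
\end{equation*}
The first term is bounded above by $-[A]_d\|\xen\|^2$ thanks to (\ref{A_d}). For the second, I would split
\begin{equation*}
(F^N(\xen,y),\xen) = (F^N(\xen,y)-F^N(0,y),\xen)+(F^N(0,y),\xen),
\end{equation*}
apply (\ref{F_d}) to the first summand and Cauchy--Schwarz together with (\ref{F_0}) to the second, then absorb the linear term via Young's inequality $[F]_0\|\xen\|\leq \tfrac{1}{2}\|\xen\|^2+\tfrac{1}{2}[F]_0^2$. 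Since all these constants are independent of $\e$, $N$, and $y$, one obtains a differential inequality of the form $\ddt\|\xen\|^2 \leq C_1\|\xen\|^2+C_2$ valid on every inter-jump interval with the same constants.

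Because $t\mapsto \xen_t$ is continuous and the inequality extends across jump times of $\yen$, Gronwall's lemma gives a uniform bound $\|\xen_t\|^2\leq (\|\xen_0\|^2+C_2T)e^{C_1T}$ on $[0,T]$, $\PP$-a.s. The identical computation applied to the (purely deterministic) equation (\ref{X_barN}) with $\overline{F}^N$ in place of $F^N(\cdot,y)$, using (\ref{bar_F_d}) and the bound at zero in Assumption \ref{ass_mac2}, produces the analogous estimate for $\xbn$. Taking $[X]_b$ to be the larger of the two resulting constants---and assuming, as is implicit, a deterministic uniform bound on $\xen_0$---concludes the argument.

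The main technical point is justifying the formal chain rule $\tfrac{1}{2}\ddt\|\xen\|^2 = <\partial_t\xen,\xen>$, since solutions of (\ref{X_eN}) are typically only mild and so $A\xen$ need not belong to $H$. This is standard in the theory of semilinear parabolic equations (see, e.g., \cite{RR04}): one either regularizes via the Yosida approximation $A_\lambda$ of $A$, carries out the calculation for $A_\lambda$ (where all terms are classical), and passes to the limit $\lambda\to\infty$; or equivalently works in the Gelfand triple $V\hookrightarrow H\hookrightarrow V^*$ with $V=\mathcal{D}((-A)^{1/2})$, in which $<Ax,x>$ is interpreted as the natural duality pairing and the energy identity holds a priori.
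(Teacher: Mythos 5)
Your proposal is correct and follows essentially the same route as the paper: an energy estimate pairing the equation with $\xen$, the dissipativity bound (\ref{A_d}), the one-sided growth condition (\ref{F_d}) together with the bound at zero to control the reaction term, Young's inequality to absorb the linear term, and a Gronwall/comparison argument, with the identical computation repeated for $\xbn$. Your added remarks on justifying the chain rule for mild solutions and on the implicit almost-sure boundedness of $\xen_0$ address points the paper leaves tacit, but they do not change the argument.
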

\begin{proof}
$\PP$-a.s. for any $t\in[0,T]$, $\e\in(0,1)$, $N\in\N$, we have, using (\ref{A_d}) and (\ref{F_d}):
\begin{align*}
\ddt\| \xen_t\|^2&=2<A\xen_t,\xen_t>+2(F^N(\xen_t,\yen_t),\xen_t)\\
&\leq -2[A]_d\|\xen_t\|^2+2[F]_d\|\xen_t\|^2+2[F]_0\|\xen_t\|\\
&\leq (1+2[F]_d-2[A]_d)\|\xen_t\|^2+[F]^2_0.
\end{align*}
The result follows using usual comparison results. The proof is quite similar for the process $\xbn$.
\end{proof}
\noindent We now state the technical assumptions needed to prove our main result. They concern properties of the reaction term $F^N$, $\overline{F}^N$ and the operator related to the jumps $Q^N$.
\begin{assumption}\label{Assump_for_Poi}
The following assumptions hold.
\begin{itemize}
\item[a)] Let $(\alpha_N)$, $(\beta_N)$, $(\gamma_N)$ be three positive numerical sequences going to infinity with $N$. For any $N\in\N$, the map $\Phi\,:\,(x,y,z)\in H\times E_N\times \R_+\mapsto (F^N(x,y)-\overline{F}^N(x),x-z)$ satisfies:
\begin{enumerate}
\item For any $N\in\N$, the map $t\mapsto \Phi(x,y,\xbn_t)$ is in $\mathcal{D}(\mathcal{A}^{\e,N})$;
\item $\Phi$ is locally bounded, but not necessarily uniformly in $N$:
\[
\forall C>0\quad \sup_{x,z\in B[0,C],y\in E_N}|\Phi(x,y,z)|={\rm O}(\alpha_N);
\]
\item The map $\Phi$is continuously Fr\'echet differentiable with respect to $x$ and has a representation $\Phi_x$ in H which is locally bounded, but not necessarily uniformly in $N$:
\[
\forall C>0\quad \sup_{x,z\in B[0,C],y\in E_N}\|\Phi_x(x,y,z)\|={\rm O}(\beta_N);
\]
\item The derivative $\Phi_t$ of the map $t\mapsto \Phi(x,y,\xbn_t)$ with respect to $t$ exists, is continuous and locally bounded, but not necessarily uniformly in $N$:
\[
\forall C>0\quad \sup_{t\in [0,T],x\in B[0,C],y\in E_N}|\Phi_t(x,y,\xbn_t)|={\rm O}(\gamma_N);
\]
\end{enumerate}
\item[b)] For any $N\in\N$ and $C>0$, we assume that if $f^N: H\times E_N\times\R_+\to \R$ is a bounded function on $B[0,C]\times E_N\times[0,T]$, then there exists constants $C'$ and $\rho_{N,f^N}$, going to infinity with $N$, such that:
\begin{equation}\label{quatb}
\sup_{t\in[0,T],x\in B[0,C],y_1\in E_N}\left|\sum_{y\in E_N} Q^N_{y_1y}(x)[f^N(x,y,t)-f^N(x,y_1,t)]^2\right|\leq C'\rho_{N,f^N}.
\end{equation}
\end{itemize}
\end{assumption}
\noindent In Section \ref{sec_appli}, we show that a large class of conductance-based neuron models satisfies all these technical assumptions. Let us define, for technical purpose, the function,
\begin{equation}
\Psi(x)=\frac{2}{x^2}\int_0^x\log(1+y)\dd y,
\end{equation}
defined for $x\geq-1$. Note that $\Psi$ is continuous on $[0,\infty)$ with $\Psi(0)=1$ and $\lim_{x\to\infty}\Psi(x)=0$. We are now ready to state our main result.
\begin{theorem}\label{thm_main}
Under Assumptions \ref{ass_Q}, \ref{ass_mac}, \ref{ass_mac2} and \ref{Assump_for_Poi}, there exist constants $C_i$, $1\leq i\leq 5$, depending only on $T$ and a sequence $(\rho_N)$ going to infinity with $N$ such that for any $(\e,N)\in(0,1)\times\N$ and $\delta>0$:
\begin{align}
&\PP(\sup_{t\in[0,T]}\|\xen_t-\xbn_t\|^2\geq\delta)\nonumber\\
&\leq\, \PP(\|\xen_0-\xbn_0\|^2\geq C_1\delta)+ 1_{\e(\beta_N+\gamma_N)\geq C_2\delta}+C_3\exp{\left(\left(-C_4\frac{\delta^2}{\e\rho_N}\right)\Psi\left(C_5\frac{\delta\alpha_N}{\rho_N}\right)\right)}.
\end{align}
\end{theorem}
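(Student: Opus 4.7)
The plan is to combine an energy estimate for $\|\xen_t-\xbn_t\|^2$ with the perturbed test function (corrector) method from singular perturbation theory, and then turn the remaining martingale term into a tail bound via an exponential inequality for purely discontinuous martingales.

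First I would derive a differential inequality for $e_t := \|\xen_t-\xbn_t\|^2$. Subtracting \eqref{X_barN} from \eqref{X_eN} and taking the inner product with $\xen_t-\xbn_t$, the dissipativity of $A$ in \eqref{A_d} and the growth condition \eqref{bar_F_d} on $\overline{F}^N$ yield
\begin{equation*}
\tfrac{\dd}{\dd t}e_t \;\leq\; (1+2[\overline{F}]_d-2[A]_d)\,e_t \;+\; 2\bigl(F^N(\xen_t,\yen_t)-\overline{F}^N(\xen_t),\,\xen_t-\xbn_t\bigr),
\end{equation*}
so that by a Gronwall argument $\sup_{t\le T}e_t$ is controlled by $e_0$ plus $\sup_{t\le T}|I_t|$, where $I_t=\int_0^t 2\Phi(\xen_s,\yen_s,\xbn_s)\,\dd s$. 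Thanks to Proposition \ref{prop_bound} the processes stay in $B[0,[X]_b]$, so all quantitative estimates from Assumption \ref{Assump_for_Poi} apply along trajectories. It then suffices to show that $\sup_{t\le T}|I_t|\ge c\delta$ has the claimed probability.

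To treat $I_t$, I would solve the Poisson equation $Q^N(x)\,u^N(x,\cdot,z)(y)=-\Phi(x,y,z)$ on $E_N$, which is well posed by Assumption \ref{ass_Q} (existence of the quasi-stationary measure $\mu_N(x)$ killing the mean of $\Phi$, plus uniform boundedness of $Q^N$). The key point is to transfer the bounds from Assumption \ref{Assump_for_Poi}(a): standard resolvent estimates give $|u^N|={\rm O}(\alpha_N)$, $\|u^N_x\|={\rm O}(\beta_N)$ and $|u^N_t|={\rm O}(\gamma_N)$. Applying the Dynkin formula to $\e u^N(\xen_t,\yen_t,\xbn_t)$, the $\frac{1}{\e}Q^N$ part of $\mathcal{A}^{\e,N}$ cancels the $\e$ prefactor and restores $\Phi$ in the drift, so that
\begin{equation*}
I_t \;=\; 2\e\bigl[u^N(\xen_0,\yen_0,\xbn_0)-u^N(\xen_t,\yen_t,\xbn_t)\bigr] \;+\; 2\e\!\int_0^t\!\!\bigl\langle A\xen_s+F^N,u^N_x\bigr\rangle+u^N_t\,\dd s \;+\; 2\e M^{\e,N}_t,
\end{equation*}
where $M^{\e,N}$ is a purely discontinuous martingale. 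The deterministic boundary and drift terms are $\mathrm{O}(\e(\alpha_N+\beta_N+\gamma_N))={\rm O}(\e(\beta_N+\gamma_N))$ (absorbing $\alpha_N$ into the others if needed), which produces the indicator term $\mathbf{1}_{\e(\beta_N+\gamma_N)\geq C_2\delta}$ in the bound: either this bound already exceeds $\delta$, or it is negligible.

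The main obstacle is the stochastic term $\e M^{\e,N}_t$. Its jumps are of order $\e\alpha_N$ (bounded differences of $u^N$) and its predictable quadratic variation is controlled, via the generator formula $\mathcal{A}^{\e,N}(u^N)^2-2u^N\mathcal{A}^{\e,N}u^N=\frac{1}{\e}\sum_{y}Q^N_{y_1y}(x)[u^N(x,y,t)-u^N(x,y_1,t)]^2$, by $\frac{C'}{\e}\rho_N$ (with $\rho_N=\rho_{N,u^N}$) from Assumption \ref{Assump_for_Poi}(b); after multiplication by $\e^2$ the bracket of $\e M^{\e,N}$ is of order $\e\rho_N T$. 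A Bernstein/Freedman-type exponential inequality for purely discontinuous martingales with bounded jumps, in the sharp form involving $\Psi(x)=\tfrac{2}{x^2}\int_0^x\log(1+y)\,\dd y$, then yields a tail of the form $\exp\!\bigl(-C_4\tfrac{\delta^2}{\e\rho_N}\Psi(C_5\tfrac{\delta\alpha_N}{\rho_N})\bigr)$. Piecing together the three contributions (initial error, deterministic $\e$-terms, martingale tail) through the Gronwall bound produces the stated estimate. The delicate point will be the quantitative control of $u^N$, $u^N_x$, $u^N_t$ in terms of $\alpha_N,\beta_N,\gamma_N$ and the verification that the function $f^N=u^N$ is admissible in \eqref{quatb} so that the sequence $\rho_N$ in the conclusion is well-defined.
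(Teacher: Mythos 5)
Your proposal follows essentially the same route as the paper: a Gronwall energy estimate for $\|\xen_t-\xbn_t\|^2$, a corrector solving the Poisson equation for $Q^N(x)$ (the paper's $f^N$ is your $u^N$ up to a sign convention), the Dynkin/It\^o identification of the predictable bracket, almost-sure bounds of order $\e(\beta_N+\gamma_N)$ on the finite-variation terms, and the Bernstein-type exponential inequality of Dzhaparidze--Van Zanten involving $\Psi$ for the martingale tail. This matches the paper's Steps 1--6, so no further comparison is needed.
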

\noindent Of course, the sequence $(\rho_N)$ in the above theorem is derived from a sequence $(\rho_{N,f^N})$ (in \ref{quatb}) for appropriate functions $f^N$. Note that if $N$ is held fixed, Theorem \ref{thm_main} implies that for any time horizon $T$ and $\delta>0$, if
$$
\lim_{\e\to0}\PP(\sup_{t\in[0,T]}\|\xen_0-\xbn_0\|^2\geq\delta)=0
$$
then
$$
\lim_{\e\to0}\PP(\sup_{t\in[0,T]}\|\xen_t-\xbn_t\|^2\geq\delta)=0,
$$
such that, as mentioned above, when $N$ is held fixed, the process $\xen$ converges in law (and even in probability here) towards its averaged version $\xbn$ when $\e$ goes to zero. Moreover, this theorem allows us to give conditions under which the process $\xen$ may possess a limit when $(\e,N)$ goes to $(0,\infty)$.
\begin{corollary}
Assume that the deterministic process $\xbn$ converges towards a process $\overline{X}$ in the sense that
\begin{equation}
\forall \delta>0\quad \PP(\sup_{t\in[0,T]}\|\xbn_t-\overline{X}_t\|^2\geq\delta)\leq C_\delta\lambda_N
\end{equation}
where $(\lambda_N)$ is some positive numerical sequence which goes to $0$ when $N$ goes to infinity and $C_\delta$ is some positive constants depending on $\delta>0$. Then, under Assumptions \ref{ass_Q}, \ref{ass_mac}, \ref{ass_mac2} and \ref{Assump_for_Poi}, there exist constants $C_i$, $1\leq i\leq 5$ depending only on $T$ and a sequence $(\rho_N)$ going to infinity with $N$ such that for any $(\e,N)\in(0,1)\times\N$ and $\delta>0$:
\begin{align}
\PP(\sup_{t\in[0,T]}\|\xen_t-\overline{X}_t\|^2\geq\delta)\leq&\, \tilde{C}_\delta\lambda_N+\PP(\|\xen_0-\overline{X}_0\|^2\geq C_1\delta)\nonumber\\
&+ 1_{\e(\beta_N+\gamma_N)\geq C_2\delta}+C_3\exp{\left(-C_4\frac{\delta^2}{\e\rho_N}\Psi\left(C_5\frac{\delta\alpha_N}{\rho_N}\right)\right)},
\end{align}
where $\tilde{C}_\delta$ is some positive constants depending only on $\delta$. Therefore, if the conditions,
\begin{equation}\label{cond_conv}
\lim_{(\e,N)\to(0,\infty)}\e(\beta_N+\gamma_N)=0,\quad \lim_{(\e,N)\to(0,\infty)}\frac{1}{\e\rho_N}\Psi\left(C_5\frac{\delta\alpha_N}{\rho_N}\right)=+\infty,
\end{equation}
are fulfilled, then $\xen$ converges towards $\overline{X}$ in probability:
\begin{equation}
\forall\delta>0\quad \lim_{(\e,N)\to(0,\infty)}\PP(\sup_{t\in[0,T]}\|\xen_t-\overline{X}_t\|^2\geq\delta)=0.
\end{equation}
\end{corollary}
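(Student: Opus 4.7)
The plan is to deduce the corollary from Theorem \ref{thm_main} by a standard triangle-inequality argument, and then check that, under the quantitative hypothesis on $\xbn \to \overline{X}$ and the two conditions in (\ref{cond_conv}), each term in the resulting bound vanishes in the limit $(\e,N)\to(0,\infty)$.

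First I would use $\|a-b\|^2\leq 2\|a-c\|^2+2\|c-b\|^2$ with $a=\xen_t$, $b=\overline{X}_t$, $c=\xbn_t$, take the supremum in $t\in[0,T]$ and apply a union bound to get
\begin{equation*}
\PP\bigl(\sup_{t\in[0,T]}\|\xen_t-\overline{X}_t\|^2\geq\delta\bigr)\leq \PP\bigl(\sup_{t\in[0,T]}\|\xen_t-\xbn_t\|^2\geq \delta/4\bigr)+\PP\bigl(\sup_{t\in[0,T]}\|\xbn_t-\overline{X}_t\|^2\geq \delta/4\bigr).
\end{equation*}
The second probability is immediately controlled by $C_{\delta/4}\lambda_N$ by hypothesis. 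To the first probability I apply Theorem \ref{thm_main} with $\delta$ replaced by $\delta/4$, which produces three contributions: an initial-condition term $\PP(\|\xen_0-\xbn_0\|^2\geq C_1\delta/4)$, the indicator $\mathbf{1}_{\e(\beta_N+\gamma_N)\geq C_2\delta/4}$, and the exponential term. To turn the first contribution into the one stated in the corollary, I apply the same triangle inequality to the initial conditions, splitting via $\overline{X}_0$; the piece involving $\|\xbn_0-\overline{X}_0\|^2$ is again bounded by a constant times $\lambda_N$ using the hypothesis, and may be absorbed into the factor $\tilde{C}_\delta\lambda_N$. After relabelling the constants $C_1,\dots,C_5$ (to absorb the factor $4$ and the various constants picked up along the way), one recovers exactly the bound stated in the corollary.

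For the convergence statement, I would argue term by term: $\tilde{C}_\delta \lambda_N\to 0$ by the assumed rate of convergence of $\xbn$; the initial-condition term vanishes under the implicit assumption that $\xen_0\to\overline{X}_0$ in probability; the indicator $\mathbf{1}_{\e(\beta_N+\gamma_N)\geq C_2\delta}$ vanishes for $(\e,N)$ large enough by the first condition in (\ref{cond_conv}); and the exponential term vanishes precisely because the second condition in (\ref{cond_conv}) forces the exponent to tend to $-\infty$. Here the only mild subtlety is that $\Psi$ is bounded above by $1$ and tends to $0$ only logarithmically, so the second condition really is a genuine constraint on the \emph{relative} speeds of $\e$, $\alpha_N$ and $\rho_N$; this is why the first condition alone is not enough.

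I do not expect any substantive obstacle in this proof: the corollary is essentially a triangle-inequality repackaging of Theorem \ref{thm_main} together with the external hypothesis on $\xbn$. The real work lies entirely in the theorem itself; here the main point is merely to verify that the triangle inequality preserves the structure of the bound and that each term in the right-hand side has the limiting behaviour claimed.
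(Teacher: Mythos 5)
Your proposal is correct and matches the intended (and only natural) argument: the paper omits an explicit proof of this corollary precisely because it is the triangle-inequality repackaging of Theorem \ref{thm_main} that you describe, with the splitting $\|a-b\|^2\leq 2\|a-c\|^2+2\|c-b\|^2$ applied both to the supremum over $[0,T]$ and to the initial conditions, followed by relabelling of constants. Your observation that the vanishing of the initial-condition term requires the (implicit) convergence of $\xen_0$ to $\overline{X}_0$ in probability is a fair and accurate reading of what the statement leaves unstated.
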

\noindent Since $\psi$ is positive on $[0,+\infty)$, the second part of condition (\ref{cond_conv}) is fulfilled as long as
$$
\lim_{(\e,N)\to(0,\infty)}\e\rho_N=0
$$
and the sequence $\left(\frac{\alpha_N}{\rho_N}\right)$ is bounded.

\section{Application to electro-physiology}\label{sec_appli}

\subsection{Description of conductance-based neuron models}\label{sec_mod}

We proceed to the description of spatially extended conductance-based neuron models. For a more complete description, see \cite{Au08,GT12,Hi01}. As is well known, neurons are interconnected cells which communicate between each-other using electro-chemical signals. The model we are interested in describes the generation and propagation of a nerve impulse at the level of one single neuron. The part of the neuron which is responsible for the transmission of the action potential through relatively long distances (compare to the size of the cell-body and the dendrites of the neural cell) is the axon or nerve fiber. It is often described as a cable longer than larger and this is why we model it by an unidimensional cable, the segment $I=[0,1]$. An action potential is generated at the initial portion of the axon thanks to a ion channel mechanism. Ion channels are membrane proteins which allow the exchange of ions between the internal and external cellular media. They are present in finite but large number all along the nerve fiber at loci $z_i=\frac iN\in\mathring{I}=(0,1)$ for $i\in\{1,\ldots,N-1\}$. To help to the understanding of the model, we present a schematic view of one single neural cell in Figure \ref{fig_neurone}, on which the different elements introduced in this paragraph are represented.\\
 Ion channels may take different states in a finite state space $E$ (typically a state is "to be open" or "to be closed"). We denote the state of the ion channel at position $z_i$ by $y(i)\in E$. Therefore, a possible configuration for all the ion channels is $y=(y(i))_{1\leq i\leq N}\in E_N=E^N$. When an ion channel at locus $z_i$ is in state $r(i)$, it allows a current to pass. Moreover, this current is of the form:
\[
c_{y(i)}(v_{y(i)}-x(z_i))
\]
where $c_{y(i)}\geq0$ is the conductance associated to the ion channel in state $y(i)$, $v_{y(i)}\in\R$ is the associated driven potential which tells us if the current is inward or outward and $x(z_i)$ is the local potential of the membrane at locus $z_i$. Let $H=L^2(I)$ be the state space for the membrane potential. The total current generated by the ion channels is then, for $(x,y)\in H\times N$:
\begin{equation}
F^N(x,y)=\frac{1}{N}\sum_{i=1}^{N-1}c_{y(i)}(v_{y(i)}-(x,\phi^N_i))\phi^N_i.
\end{equation}
Remark that this is a spatial averaging of all the ionic currents. The function $\phi^N_i$ specifies that the transmission of the current is localized around $z_i$. They are renormalized mollifiers, 
\[
\forall\xi\in I\quad\phi^N_i(\xi)=N\psi\left(N(\xi-z_i)\right)
\]
where $\psi$ is the mollifier
\[
\forall\xi\in I\quad\psi(\xi)=\exp\left(\frac{-1}{1-\xi^2}\right)1_{[-1,1]}(\xi).
\]
As explained above, the channel at locus $z_i$ may switch between the states of $E$. This switching depends on the local potential of the membrane. If we denote by $X_t$ the membrane potential at time $t$ and $Y_t$ the configuration of the ion channels at the same time, then
\begin{equation}\label{mod_jump}
\PP(Y_{t+h}(i)=e_2|Y_t(i)=e_1)=\frac1\e a_{e_1e_2}((X_t,\phi^N_i))h+o\left(\frac{h}{\e}\right),\quad e_1\neq e_2\in E.
\end{equation}
The non-negative functions $(a_{e_1e_2}(\cdot))_{e_1e_2}$ are the jump rates from one state of $E$ to another. They are either bounded and Lipschitz positive functions or equal to the null function. The parameter $\e\in(0,1)$ accelerate this rate of jump. Besides, the ion channels are assumed to evolve independently over infinitesimal time-scales.\\
Once a current is generated by the reaction term $F^N$, it is propagated along the nerve fiber thanks to the Laplacian operator $\D$:
\begin{equation}\label{mod_cont}
\partial_t \xen_t=\D \xen_t+F^N(\xen_t,\yen_t).
\end{equation}
This equation is endowed with zero-Dirichlet boundary conditions (corresponding to a clamped axon). To emphasize the role of $\e$ and $N$ in the process, we write from now on $(\xen,\yen)$ for the process satisfying system (\ref{mod_jump}-\ref{mod_cont}) with initial conditions $(\xen_0,\yen_0)\in H\times E_N$ (which may be random).\\
\begin{figure}
\begin{center}
\begin{tikzpicture}[scale=1]
\draw[gray,fill] (0,0)--(3,0)--(3,-0.1)--(0,-0.1)..controls +(-0.5,0) and +(-0.1,0.75)..(-1,-1)..controls +(-0.25,0) and +(0.25,0)..(-1.1,-1)..controls +(0,0.5) and +(0.25,0.25)..(-2,-0.5)--(-2.6,-1)--(-2.1,-0.45)..controls +(0.25,0.25) and +(0.25,-0.25)..(-2,0.5)--(-1.9,0.6)..controls +(0.5,-0.5) and +(-0.25,-0.25)..(-0.6,0.7)--(-0.5,0.6)..controls +(-0.5,-0.25) and +(-0.75,0)..(0,0);
\draw[<->] (0,-0.7)-- node[midway,below]{axon: $I$} (5,-0.7);
\draw (0.1,-0.1) circle (0.05) ;
\draw (0.3,-0.1) circle (0.05) ;
\draw (0.6,-0.1) circle (0.05) ;
\draw (1.5,-0.1) circle (0.05) ;
\draw (0,0.3)..controls +(0.15,0) and +(-0.15,0)..(0.2,0.3)..controls +(0,0.15) and +(-0.15,0)..(0.3,1.3)..controls +(0,0.15) and +(-0.15,-0.15)..(0.5,0.2)..controls +(0.15,0.15) and +(-0.15,0)..(0.7,0.3);
\draw[->] (0.5,0.75)--node[midway,above]{\small $\xen_t$}(1.3,0.75);
\draw[gray,fill] (-2.4,-0.6) circle (0.07);
\draw[gray,very thick] (-2.4,-0.6)--(-2.3,-0.7);
\draw[black,fill] (-2.55,-0.45) circle (0.07);
\draw[black,line width=2.5] (-2.55,-0.45)..controls +(-0.25,0.25) and +(1,0)..(-3.5,0.2)..controls +(-0.25,0) and +(1,0)..(-4,0.6)..controls +(-0.25,0) and +(1,0)..(-4.5,0.8);
\draw[gray,very thick,dotted] (3,0)--(4.5,0);
\draw[gray,very thick,dotted] (3,-0.1)--(4.5,-0.1);
\draw[gray,fill] (4.5,0)--(5,0)..controls +(0.5,0) and +(-0.5,0)..(6.5,1)--(6.5,0.9)..controls +(-0.5,0) and +(0.5,0)..(5.2,-0.05)..controls +(0.5,0) and +(-0.5,0)..(6.7,-0.6)--(6.7,-0.7)..controls +(-0.5,0) and +(0.5,0)..(5,-0.1)--(4.5,-0.1)--(4.5,0);
\draw[gray,fill] (6.55,0.95) circle(0.08);
\draw[gray,fill] (6.7,-0.65) circle(0.08);
\draw[black,fill] (6.7,1.4)..controls +(0.25,-0.5) and +(-0.5,0)..(7.5,0.6)--(7.5,0.7)..controls +(-0.5,0) and +(0.3,-0.5)..(6.7,1.4);
\draw (1.5,-0.1)--(1.8,0.5)--(2.1,0.5) node[right]{ionic channel: {\small $\yen_t(i)$}};
\draw (-2.5,-0.7)--(-3,-1.2)--(-3.25,-1.2) node[left]{synapse};
\draw (6.55,0.8)--(7,-0.05)--(7.2,-0.05) node[right]{synapses};
\draw (6.7,-0.5)--(7,-0.05);
\draw[black,fill] (6.75,1) circle (0.07);
\draw[black,very thick] (6.75,1)--(6.9,1.075);
\draw (-0.8,-0.5)--(-0.1,-1.3)--(0.2,-1.3) node[right]{soma};
\draw (-0.7,0.7)--(-1,1.4)--(-1.3,1.4) node[left]{dendrite};
\end{tikzpicture}
\end{center}
\caption{Schematic view of one single neuron with the soma (cell body), the dendrites, synapses  and axon, see \cite{Hi01} for the precise definition of these terms. An action potential is generated at the initial portion of the axon thanks to the presence of the ionic channels, depicted by black circles here, which can open and close at voltage dependent rate, allowing a flow of ions to enter or leave the cell, creating a current. The state of the ionic channel at the locus $z_i$ at time $t$ is $\yen_t(i)$ and follows the dynamic of equation (\ref{mod_jump}). The created action potential propagates along the nerve fiber, assimilated to the segment $I$, according to the dynamic of $\xen_t$ given by equation (\ref{mod_cont}).}\label{fig_neurone}
\end{figure}
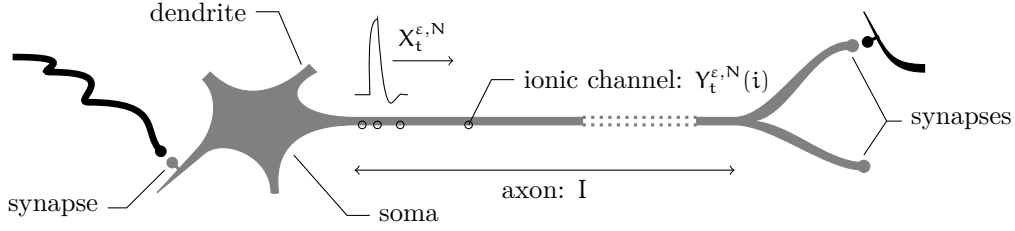
Concerning averaging when $N$ is held fixed, such a system have been studied in \cite{GT12,GT14}. For a law of large number when $N$ goes to infinity but with $\e$ held fixed, see \cite{RTW12,TR13}. The equations (\ref{mod_cont}) and (\ref{mod_jump}) describe a so-called conductance-based neuron model. Such models are able to reproduce numerous features of biological neurons such as their excitability, sensitivity and robustness, see for example \cite{TSS20}.\\
We assume that for any $\zeta\in\R$, the continuous time Markov chains with jump rates $(a_{e_1e_2}(\zeta))_{e_1e_2}$ has a unique invariant measure $\nu(\zeta)$ which is bounded and Lipschitz in its argument $\zeta$. Therefore, the averaging measure $\mu_N$ is defined, for any potential $x\in H$, by:
\begin{displaymath}
\mu_N(x)=\otimes_{i=1}^{N-1}\nu((x,\phi^N_i)).
\end{displaymath}

\subsection{Spatio-temporal averaging for these models}

The averaged reaction term for the process satisfying equations (\ref{mod_jump}-\ref{mod_cont}) is given, for $x\in H$ and $N\in\N$ by:
\begin{align*}
\overline{F}^N(x)&=\int_{E_N}F^N(x,y)\mu(x)(\dd y)\\
&=\frac{1}{N}\sum_{i=1}^{N-1}\sum_{e\in E}\nu((x,\phi^N_i))(\{e\})c_{e}(v_{e}-(x,\phi^N_i))\phi^N_i.
\end{align*}

\noindent In Section \ref{sec_proof_neuron}, we show that the conductance based neuron model of Section \ref{sec_mod} satisfies Assumption \ref{Assump_for_Poi} with $\alpha_N=N$, $\beta_N=N\sqrt{N}$, $\gamma_N=N^3$ and $\rho_N=N^3$. Besides, this is rather easy to see that the model also satisfies Assumptions \ref{ass_Q}, \ref{ass_mac} and \ref{ass_mac2}. In the context of conductance-based neuron models, Theorem \ref{thm_main} reads as follows.

\begin{theorem}\label{thm_main_mod}
There exist constants $C_i$, $1\leq i\leq 5$, depending only on $T$ such that for any $(\e,N)\in(0,1)\times\N$ and $\delta>0$:
\begin{align}
\PP(\sup_{t\in[0,T]}\|\xen_t-\xbn_t\|^2\geq\delta)&\leq \PP(\|\xen_0-\xbn_0\|^2\geq C_1\delta)+1_{\e(N\sqrt{N}+N^3)\geq C_2\delta}\nonumber\\
&+C_3\exp{\left(-C_4\frac{\delta^2}{\e N^3}\Psi\left(C_5\frac{\delta}{N^2}\right)\right)}.
\end{align}
\end{theorem}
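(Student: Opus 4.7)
The plan is to match the neuron model of Section \ref{sec_mod} against the abstract framework of Section \ref{sec_res}, verify each of Assumptions \ref{ass_Q}--\ref{Assump_for_Poi} with the stated exponents $\alpha_N = N$, $\beta_N = N^{3/2}$, $\gamma_N = N^3$, $\rho_N = N^3$, and then invoke Theorem \ref{thm_main}. The identification is $H = L^2(I)$, $A = \D$ (zero-Dirichlet Laplacian), $E_N = E^N$, and $F^N$, $\overline{F}^N$ as given in Section \ref{sec_appli}.

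First I would dispatch Assumptions \ref{ass_Q}, \ref{ass_mac}, \ref{ass_mac2}. Assumption \ref{ass_Q} holds because the rates $a_{e_1 e_2}(\cdot)$ are bounded by hypothesis and because the product-form quasi-stationary measure $\mu_N(x) = \bigotimes_{i=1}^{N-1} \nu((x, \phi^N_i))$ comes from independence of sites on infinitesimal scales. For Assumption \ref{ass_mac}, the Laplacian with zero-Dirichlet boundary is dissipative on its domain by the Poincar\'e inequality, while the monotonicity (\ref{F_d}) for $F^N$ holds with $[F]_d = 0$ because
\[
(F^N(x_1, y) - F^N(x_2, y), x_1 - x_2) = -\frac{1}{N} \sum_{i=1}^{N-1} c_{y(i)} (\phi^N_i, x_1 - x_2)^2 \leq 0,
\]
thanks to the non-negativity of the conductances. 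Assumption \ref{ass_mac2} follows along similar lines using the Lipschitz dependence of $\nu$ on its argument.

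Most of the work lies in Assumption \ref{Assump_for_Poi}, and the key tool is a family of mollifier estimates immediate from $\phi^N_i(\xi) = N \psi(N(\xi - z_i))$: namely $\|\phi^N_i\| \asymp \sqrt{N}$, $\|\D \phi^N_i\| \asymp N^{5/2}$, and the Gram matrix $((\phi^N_i, \phi^N_j))$ is tridiagonal with entries $O(N)$ since the supports $[(i-1)/N, (i+1)/N]$ overlap only between neighbours. From these, bounds on $\|F^N(x, y)\|$, $\|\D F^N(x, y)\|$ and their analogues for $\overline{F}^N$ follow on each ball $B[0, C] \subset H$, and one then reads off the exponents in turn: bounding $\Phi(x, y, z) = (F^N(x, y) - \overline{F}^N(x), x - z)$ accommodates $\alpha_N = N$; the explicit computation of $\Phi_x$, which splits as a piece linear in $x - z$ plus $F^N - \overline{F}^N$ itself, accommodates $\beta_N = N^{3/2}$; and $\Phi_t(x, y, \xbn_t) = -(F^N(x, y) - \overline{F}^N(x), \D \xbn_t + \overline{F}^N(\xbn_t))$, after moving $\D$ onto $F^N - \overline{F}^N$ by self-adjointness, accommodates $\gamma_N = N^3$ (this is where the $N^{5/2}$ from the second derivative of the mollifier enters). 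For (\ref{quatb}) I would exploit the sparsity of $Q^N(x)$: only single-site flips contribute, giving at most $(|E|-1) N$ reachable states from any $y_1$, and for the $f^N$ relevant to the proof of Theorem \ref{thm_main} (the solution of the Poisson equation in the slow variable) the one-site increment is controlled again by the mollifier estimates, so that squaring and summing produces $\rho_N = N^3$.

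The hard part is the bookkeeping for $\Phi_t$ and the precise identification of $f^N$ in (\ref{quatb}): both require tracking through the Poisson equation for the frozen-$x$ generator of $\yen$, whose solution inherits mollifier-dependent regularity, and this is where the interplay between the scale $1/N$ carried by $\phi^N_i$ and the regularity properties of $\xbn$ under $\D$ is most delicate. Once the four exponents are in place, Theorem \ref{thm_main_mod} follows by a direct substitution into the inequality of Theorem \ref{thm_main}.
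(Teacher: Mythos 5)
Your proposal is correct and follows essentially the same route as the paper: verify Assumptions \ref{ass_Q}--\ref{Assump_for_Poi} for the neuron model via the mollifier estimates $\|\phi^N_i\|=O(\sqrt{N})$, $\|\D\phi^N_i\|=O(N^{5/2})$ to obtain $\alpha_N=N$, $\beta_N=N\sqrt{N}$, $\gamma_N=N^3$, and $\rho_N=N^3$ (the latter from the single-site sparsity of $Q^N$ together with the crude increment bound $2\sup|f^N|=O(\alpha_N)$), then substitute into Theorem \ref{thm_main}. The only differences are cosmetic: you spell out Assumptions \ref{ass_Q}--\ref{ass_mac2} (which the paper leaves to the reader) and note the tridiagonal Gram structure, which the paper's bounds do not actually need.
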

\noindent Since $\lim_{N\to\infty}\Psi\left(C_5\frac{\delta}{N^2}\right)=\Psi(0)=1$, this shows that for the limit to be zero when $\e$ and $N$ converge jointly to $0$ and $\infty$, it is sufficient to impose the relative speed of convergence:
\[
\lim_{(\e,N)\to (0,\infty)} \e N^3=0.
\]
This means that $\xen$ converges when the jump frequency is somehow high with respect to the size of the population of ion channels ($\e={\rm o}\left(\frac{1}{N^3}\right)$). As far as we know, this fact has not been noticed before. When $N$ goes to infinity, the process $\xbn$ possesses a limit $\overline{X}$ satisfying the following PDE,
$$
\partial_t \overline{X}_t=\D \overline{X}_t+\sum_{e\in E}\nu(\overline{X}_t)(\{e\})c_{e}(v_e-\overline{X}_t).
$$
One may show, but this is not our purpose here, that this limit takes place in $\mathcal{C}([0,T],L^2(I))$. Therefore, $\xen$ converges towards $\overline{X}$ in probability once $\e N^3$ goes to zero.

\section{Proofs}\label{sec_proof}

This section is devoted to the proofs of Theorems \ref{thm_main} and \ref{thm_main_mod}.

\subsection{Proof of Theorem \ref{thm_main}}

\noindent We proceed to the proof of Theorem \ref{thm_main}. The plan of the campaign is the following:\\
\noindent \textbf{Step 1:} Show that the following Gronwall-like inequality almost-surely holds for $\|\xen_t-\xbn_t\|^2$:
\begin{align}\label{pl_eq}
\|\xen_t-\xbn_t\|^2&\leq\|\xen_0-\xbn_0\|^2+2C_1\int_0^t\|\xen_s-\xbn_s\|^2\dd s\\
&+ 2\int_0^t(F^N(\xen_s,\yen_s)-\overline{F}^N(\xen_s),\xen_s-\xbn_s)\dd s.\nonumber
\end{align}
where $C_1$ is some constant.\\
\textbf{Step 2:} Find the semi-martingale decomposition of the term
$$2\int_0^t(F^N(\xen_s,\yen_s)-\overline{F}^N(\xen_s),\xen_s-\xbn_s)\dd s.$$
For this purpose, use the Poisson equation associated to the jump operator $Q^N$.\\
\textbf{Step 3:} Bound the finite variation part thanks to almost-sure estimates.\\
\textbf{Step 4:} Plug this latter estimate in (\ref{pl_eq}) and use Gronwall's lemma to almost-surely bound $\|\xen_t-\xbn_t\|^2$:
\begin{align*}
\sup_{t\in[0,T]}\|\xen_t-\xbn_t\|^2&\leq\left(\|\xen_0-\xbn_0\|^2+{\rm O}(\e(\beta_N+\gamma_N))+2\e\sup_{t\in[0,T]}|M^{\e,N}_t|\right)e^{C_1T},
\end{align*}
$\PP$-a.s.\\
\textbf{Step 5:} Bound the remaining martingale part $\sup_{t\in[0,T]}|M^{\e,N}_t|$ in probability using a martingale exponential inequality.\\
\textbf{Step 6:} Aggregate all the estimates to conclude.\\

\noindent \textbf{Step 1: Gronwall-like inequality.} Let us consider the following decomposition:
\begin{align}
&\ddt\|\xen_t-\xbn_t\|^2\\
&=2<A(\xen_t-\xbn_t),\xen_t-\xbn_t>+2(F^N(\xen_t,\yen_t)-\overline{F}^N(\xbn_t),\xen_t-\xbn_t)\nonumber\\
&\leq 2([\overline{F}]_d-[A]_d)\|\xen_t-\xbn_t\|^2+2(F^N(\xen_t,\yen_t)-\overline{F}^N(\xen_t),\xen_t-\xbn_t).\label{pre_gron}
\end{align}
We denote by $C_1$ the constant $[\overline{F}]_d-[A]_d$. From (\ref{pre_gron}) we almost-surely obtain:
\begin{align}\label{gron}
\|\xen_t-\xbn_t\|^2&\leq\|\xen_0-\xbn_0\|^2+2C_1\int_0^t\|\xen_s-\xbn_s\|^2\dd s\\
&+ 2\int_0^t(F^N(\xen_s,\yen_s)-\overline{F}^N(\xen_s),\xen_s-\xbn_s)\dd s.\nonumber
\end{align}
\textbf{Step 2: Semi-martingale decomposition.} Our aim is to obtain a semi-martingale decomposition for the term defined by:
\begin{equation}
V^{\e,N}_t=\int_0^t(F^N(\xen_s,\yen_s)-\overline{F}^N(\xen_s),\xen_s-\xbn_s)\dd s.
\end{equation}
\begin{proposition}[Poisson equation]\label{prop_Poi}
For any $N\in\N$, there is a unique solution $f^N$ defined on $H\times E_N\times \R_+$ of the following Poisson equation:
\begin{equation}\label{p_eq}
\left\{\begin{array}{rcl}
Q^N(x)f^N(x,y,t)&=&(F^N(x,y)-\overline{F}^N(x),x-\xbn_t),\\
\int_{E_N} f^N(x,y,t)\mu_N(x)(\dd y)&=&0.
\end{array}
\right.
\end{equation}
Moreover, $f^N$ satisfies the following properties:
\begin{enumerate}
\item $f^N\in \mathcal{D}(\mathcal{A}^{\e,N})$;
\item $\sup_{t\in[0,T],x\in B[0,[X]_b],y\in E_N}|f^N(x,y,t)|={\rm O}(\alpha_N)$;
\item the map $f^N$is continuously Fr\'echet differentiable with respect to $x$ and has a representation $f^N_x$ in H satisfying $\sup_{t\in[0,T],x\in B[0,[X]_b],y\in E_N}\|f^N_x(x,y,t)\|={\rm O}(\beta_N)$;
\item the map $f^N$is continuously differentiable with respect to $t$ and this derivative satisfies $\sup_{t\in[0,T],x\in B[0,[X]_b],y\in E_N}|f^N_t(x,y,t)|={\rm O}(\gamma_N)$.
\end{enumerate}
\end{proposition}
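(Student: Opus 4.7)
The idea is to solve the Poisson equation \eqref{p_eq} slice by slice in $(x,t)$ via the Fredholm alternative on the finite set $E_N$, then establish the claimed regularity and the $\alpha_N,\beta_N,\gamma_N$ bounds by a pseudo-inverse (equivalently, probabilistic) representation, and finally read off membership in $\mathcal{D}(\mathcal{A}^{\e,N})$ from the extended-generator characterization recalled in Section \ref{sec_res}.

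\textbf{Solvability.} First I would fix $(x,t)$ and view $Q^N(x)$ as a finite generator on $E_N$. By Assumption \ref{ass_Q} its left kernel is spanned by $\mu_N(x)$ and its right kernel by the constants, so the Fredholm alternative makes the linear equation $Q^N(x) g(\cdot)=\Phi(x,\cdot,\xbn_t)$ solvable iff $\int_{E_N}\Phi(x,y,\xbn_t)\,\mu_N(x)(\dd y)=0$, where $\Phi$ is the object introduced in Assumption \ref{Assump_for_Poi}. This compatibility condition is exactly the definition of $\overline{F}^N$, and the normalization $\int f^N\mu_N(x)(\dd y)=0$ selects a unique solution among those that differ by a (possibly $(x,t)$-dependent) additive constant.

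\textbf{Regularity and bounds.} Next I would write this unique solution in closed form, either probabilistically as
$$f^N(x,y,t) = -\int_0^\infty \EE^y\bigl[\Phi(x, Y^{N,x}_s, \xbn_t)\bigr]\,\dd s,$$
where $Y^{N,x}$ denotes a CTMC on $E_N$ with frozen generator $Q^N(x)$ started at $y$, or algebraically via the Drazin/group pseudo-inverse, $f^N(x,\cdot,t) = -(Q^N(x))^{\sharp}\Phi(x,\cdot,\xbn_t)$. Uniqueness of $\mu_N(x)$ for every $x$ forces $Q^N(x)$ to have constant rank $|E_N|-1$, and standard matrix perturbation theory gives that $x\mapsto (Q^N(x))^{\sharp}$ is as smooth as $x\mapsto Q^N(x)$. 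Combined with the smoothness of $\Phi$ in $(x,t)$ supplied by Assumption \ref{Assump_for_Poi}, this yields continuous Fréchet differentiability of $f^N$ in $x$ with representation $f^N_x\in H$ (inherited from $\Phi_x$) and continuous differentiability in $t$ satisfying $f^N_t(x,\cdot,t)=-(Q^N(x))^{\sharp}\Phi_t(x,\cdot,\xbn_t)$. The three bounds of the proposition then follow by transferring the locally-uniform estimates on $\Phi$, $\Phi_x$, $\Phi_t$ through the pseudo-inverse, the $N$-dependent operator norm of $(Q^N(x))^{\sharp}$ being absorbed into the ${\rm O}(\cdot)$. Boundedness, measurability and Fréchet differentiability with an $H$-representation are precisely what the definition of $\mathcal{D}(\mathcal{A}^{\e,N})$ requires, so $f^N$ lies in the domain.

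\textbf{Main obstacle.} The delicate step is controlling $\partial_x(Q^N(x))^{\sharp}$ uniformly over $x\in B[0,[X]_b]$ while tracking its $N$-dependence, so that the output rates match exactly $\alpha_N,\beta_N,\gamma_N$. Perturbation theory for matrices of constant rank $|E_N|-1$ supplies the smoothness, but the growth of the pseudo-inverse norm with $N$ (and of its derivative) must be bookkept with care; the cleanest route is to pull the bounds straight off those assumed on $\Phi,\Phi_x,\Phi_t$ rather than reproving them via a spectral-gap estimate on $Q^N(x)$ that may degrade with $N$.
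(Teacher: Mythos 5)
Your solvability argument coincides with the paper's: fix $(x,t)$, apply the Fredholm alternative on the finite set $E_N$, observe that the compatibility condition $\int_{E_N}\Phi(x,y,\xbn_t)\,\mu_N(x)(\dd y)=0$ holds by the very definition of $\overline{F}^N$, and select the unique solution with zero $\mu_N(x)$-mean. For the regularity and the bounds the paper is laconic (points 1--4 are declared to be ``a direct consequence of Assumption \ref{Assump_for_Poi} and the fact that $Q^N(x)$ is a finite dimensional operator''), so your explicit representation $f^N(x,\cdot,t)=-(Q^N(x))^{\sharp}\Phi(x,\cdot,\xbn_t)$, together with constant-rank perturbation theory in $x$, is a legitimate and more informative way of filling in what the paper leaves implicit.

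There is, however, one statement that is wrong as written, and it sits exactly on the point you yourself flag as the main obstacle: you cannot let ``the $N$-dependent operator norm of $(Q^N(x))^{\sharp}$ be absorbed into the ${\rm O}(\cdot)$''. The ${\rm O}$'s in points 2--4 are asymptotics in $N$, so a genuinely $N$-dependent factor coming from the pseudo-inverse would turn the rates $\alpha_N,\beta_N,\gamma_N$ into $\|(Q^N(x))^{\sharp}\|\alpha_N$, etc., and this would propagate into the criterion $\lim\e\rho_N=0$ of the main theorem. Your fallback --- ``pull the bounds straight off those assumed on $\Phi,\Phi_x,\Phi_t$'' --- is circular unless you first establish that the reduced resolvent $(Q^N(x))^{\sharp}$ is bounded uniformly in $N$ and in $x\in B[0,[X]_b]$ on the class of right-hand sides generated by $\Phi$, $\Phi_x$ and $\Phi_t$. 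Assumption \ref{ass_Q} alone (uniformly bounded rates on a state space of growing cardinality) does not give this, since the spectral gap of $Q^N(x)$ may degenerate as $N\to\infty$. What saves the day in the intended application is structure: $\Phi$ is a normalized sum of functions each depending on a single coordinate $y(i)$, the channels are independent under the frozen dynamics, so $(Q^N(x))^{\sharp}$ acts coordinatewise through the single-channel pseudo-inverse, whose norm is $N$-free; the rates then transfer. To make your proof complete you should either add the uniform bound on $(Q^N(x))^{\sharp}$ as an explicit hypothesis (arguably it is the hidden content of the paper's ``direct consequence'') or derive it from such a tensorization argument.
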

\begin{proof}
Let us fix $N\in\N$, $x\in H$ and $t\in\R_+$ in (\ref{p_eq}). Then Equation (\ref{p_eq}) has a solution if, and only if, the map $\Phi_{N,x,t}:y\in E\mapsto (F^N(x,y)-\overline{F}^N(x),x-\xbn_t)$ is in ${\rm Im}Q^N(x)$. According to the Fredholm alternative in finite dimension, $\Phi_{N,x,t}$ is in ${\rm Im}Q^N(x)$ if, and only if, $\int_E\Phi_{N,x,t}(y)\mu_N(x)(\dd y)=0$. The latter equality is true by definition of $\overline{F}^N$. Thus there exists $f^N(x,\cdot,t)$ satisfying
\begin{equation}\label{eq_poi_i}
Q^N(x)f^N(x,y,t)=(F^N(x,y)-\overline{F}^N(x),x-\xbn_t).
\end{equation}
One may always choose $f^N$ such that $\int_{E_N} f^N(x,y,t)\mu_N(x)(\dd y)=0$ by projection and therefore ensure uniqueness. According to the equality (\ref{eq_poi_i}), the fact that $f^N$ is in $\mathcal{D}(\mathcal{A}^{\e,N})$ and satisfies points 2. 3. and 4. is a direct consequence of Assumption \ref{Assump_for_Poi} and the fact that for each $x\in H$, $Q^N(x)$ is a finite dimensional operator.
\end{proof}

\begin{proposition}
There exists a squared integrable martingale $M^{\e,N}$ such that for all $t\in[0,T]$:
\begin{equation}\label{def_mart}
f^N(\xen_t,\yen_t,t)=f^N(\xen_0,\yen_0,0)+\int_0^t \mathcal{A}^{\e,N}f^N(\xen_s,\yen_s,s)\dd s +M^{\e,N}_t.
\end{equation}
Its bracket satisfies:
\begin{equation}\label{def_brack}
<M^{\e,N}>_t=\frac{1}{\e}\sum_{y\in E_N}\int_0^t Q^N_{\yen_sy}(\xen_s)[f^N(\xen_s,y,s)-f^N(\xen_s,\yen_s,s)]^2\dd s.
\end{equation}
\end{proposition}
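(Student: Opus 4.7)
The plan is to invoke the martingale problem formulation for Hilbert-valued PDMPs developed in \cite{BR11} and apply it to the test function $f^N$ produced by Proposition \ref{prop_Poi}. Since $f^N$ depends explicitly on $t$, I would first pass to the space-time process $(t,\xen_t,\yen_t)$, a PDMP on $\R_+\times H\times E_N$ whose extended generator acts on a smooth $\phi(t,x,y)$ as $\partial_t\phi+\mathcal{A}^{\e,N}\phi$. Properties 1, 3 and 4 of Proposition \ref{prop_Poi} are precisely what is needed to place $f^N$ in the domain of this augmented generator; Dynkin's formula (\cite{BR11}, Theorem 4) then yields the decomposition (\ref{def_mart}) with $M^{\e,N}$ a local martingale, the time derivative $\partial_t f^N$ being absorbed on the right-hand side into the drift part of $\mathcal{A}^{\e,N}f^N$.

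Next, I would compute the predictable bracket via the carr\'e du champ operator $\Gamma(\phi)=\mathcal{A}^{\e,N}(\phi^2)-2\phi\,\mathcal{A}^{\e,N}\phi$. The transport part of the generator (the $<Ax+F^N,\phi_x>+\partial_t\phi$ contribution) is a first-order differential operator and therefore cancels exactly in $\Gamma$, so only the jump part survives. The row-sum identity $\sum_{y'\in E_N}Q^N_{yy'}(x)=0$ then gives the pointwise formula
\[
\frac{1}{\e}\left[Q^N(x)\phi^2(x,\cdot)(y)-2\phi(x,y)Q^N(x)\phi(x,\cdot)(y)\right]=\frac{1}{\e}\sum_{y'\in E_N}Q^N_{yy'}(x)[\phi(x,y',t)-\phi(x,y,t)]^2,
\]
from which (\ref{def_brack}) follows by time integration along the trajectory $(\xen_s,\yen_s)$.

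Finally, to upgrade $M^{\e,N}$ from a local to a square-integrable martingale on $[0,T]$, I would use Proposition \ref{prop_bound} to confine the trajectory of $\xen$ inside the deterministic ball $B[0,[X]_b]$; property 2 of Proposition \ref{prop_Poi} then yields $|f^N(\xen_s,\yen_s,s)|\leq C\alpha_N$, and combined with the uniform bound $|Q^N_{y_1y_2}|\leq[Q]$ from Assumption \ref{ass_Q} and the finiteness of $E_N$, this produces an almost-sure estimate of the form $<M^{\e,N}>_T\leq C|E_N|\alpha_N^2/\e$, hence in particular $\EE[<M^{\e,N}>_T]<\infty$.

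The only real obstacle is a bookkeeping one: verifying that the time-dependent $f^N$ truly lies in the domain of the space-time extended generator on the infinite-dimensional state space $H\times E_N$, so that Dynkin's formula from \cite{BR11} is applicable. This is exactly what the four regularity properties of Proposition \ref{prop_Poi} were tailored to secure, so once they are in hand the remaining steps are a standard carr\'e-du-champ manipulation combined with the a priori bounds already established.
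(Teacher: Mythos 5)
Your proposal is correct and follows essentially the same route as the paper: membership of $f^N$ in the domain of the (space--time) extended generator gives the Dynkin decomposition (\ref{def_mart}), and the bracket is obtained by comparing the Dynkin expansion of $(f^N)^2$ with the It\^o expansion of the square of the semi-martingale, which is precisely your carr\'e du champ computation; square integrability follows from the a priori bounds as you indicate. The paper's proof is a three-line sketch of exactly these steps, so your write-up simply supplies the details it leaves implicit.
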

\begin{proof}
The fact that $M^{\e,N}$ is a martingale follows from the fact that $f^N\in\mathcal{D}(\mathcal{A}^{\e,N})$. From the properties of $f^N$, see Proposition \ref{prop_Poi}, we see that this martingale is squared integrable. The expression for the bracket follows from the identification of the finite variation  part in the semi-martingale expansion of $(f^N)^2$ using on the one hand the Dynkin formula and on the other hand the It\^o formula.  
\end{proof}
\noindent For convenience, we introduce the notation $\partial_{i,s}$, $i=1,3$, denoting the partial derivative with respect to $s$ \textit{along the flow}. More explicitly, in our context,
$$
\partial_{1,s}f^N(\xen_s,\yen_s,s)=\lim_{h\to0}\frac1h(f^N(\xen_{s+h},\yen_s,s)-f^N(\xen_s,\yen_s,s))
$$
and
$$
\partial_{3,s}f^N(\xen_s,\yen_s,s)=\lim_{h\to0}\frac1h(f^N(\xen_{s},\yen_s,s+h)-f^N(\xen_s,\yen_s,s)).
$$
The existence of such derivatives is ensured by Proposition \ref{prop_Poi} and an exact representation for $\partial_{1,s}$ is given in the proof of Proposition \ref{prop_vf_borne}.
\begin{proposition}\label{prop_V_decompo}
The process $(V^{\e,N}_t,t\geq0)$ has the following semi-martingale decomposition, $\PP$-a.s.,
\begin{align*}
V^{\e,N}_t&=\e f^N(\xen_t,\yen_t,t)-\e f^N(\xen_0,\yen_0,0)-\e\int_0^t(\partial_{1,s}+\partial_{3,s})f^N(\xen_s,\yen_s,s)\dd s-\e M^{\e,N}_t.
\end{align*}
\end{proposition}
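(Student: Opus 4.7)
The plan is to apply the Dynkin formula (\ref{def_mart}) to the test function $f^N$ and rearrange the resulting identity, using the Poisson equation (\ref{p_eq}) to recognize one of the terms as being exactly proportional to the integrand of $V^{\e,N}$.

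Starting from (\ref{def_mart}), I would decompose $\mathcal{A}^{\e,N} f^N(\xen_s,\yen_s,s)$ according to the explicit form of the extended generator given earlier. The drift along the continuous flow of $\xen$, together with the explicit time dependence of $f^N$, produces $(\partial_{1,s} + \partial_{3,s}) f^N(\xen_s,\yen_s,s)$ by the chain rule; these two partial derivatives exist and are locally bounded thanks to points 3.~and 4.~of Proposition \ref{prop_Poi}. The jump contribution is $\frac{1}{\e} Q^N(\xen_s) f^N(\xen_s,\cdot,s)(\yen_s)$, which by the first line of (\ref{p_eq}) evaluated at $x = \xen_s$, $y = \yen_s$, $t = s$, equals precisely $\frac{1}{\e}(F^N(\xen_s,\yen_s) - \overline{F}^N(\xen_s), \xen_s - \xbn_s)$. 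Substituting these identifications into (\ref{def_mart}) gives
\[
f^N(\xen_t,\yen_t,t) = f^N(\xen_0,\yen_0,0) + \int_0^t (\partial_{1,s} + \partial_{3,s}) f^N(\xen_s,\yen_s,s) \dd s + \frac{1}{\e} V^{\e,N}_t + M^{\e,N}_t.
\]
Isolating $V^{\e,N}_t$ and multiplying through by $\e$ yields the claimed decomposition.

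There is no genuine obstacle here: the proof is a direct consequence of the Dynkin formula combined with the defining identity of $f^N$ as a solution of the Poisson equation. The only technical prerequisites, namely that $f^N \in \mathcal{D}(\mathcal{A}^{\e,N})$ so that (\ref{def_mart}) applies, and that the flow-derivative $\partial_{1,s}$ and time-derivative $\partial_{3,s}$ exist and are integrable along the trajectory, are all furnished by Proposition \ref{prop_Poi} (which itself relies on Assumption \ref{Assump_for_Poi}).
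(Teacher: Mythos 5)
Your proof is correct and follows exactly the route the paper takes: the paper's own proof is the one-line remark that the decomposition is ``a re-arrangement of (\ref{def_mart}) using the definition of $V^{\e,N}$ and Proposition \ref{prop_Poi}'', and your argument simply makes explicit the splitting of $\mathcal{A}^{\e,N}f^N$ into the flow/time derivatives and the jump part identified via the Poisson equation. Nothing is missing.
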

\begin{proof}
This is simply a re-arrangement of (\ref{def_mart}) using the definition of $V^{\e,N}$ and Proposition \ref{prop_Poi}.
\end{proof}
\noindent \textbf{Step 3: A bound for the finite variation part.} From now on, our aim is to bound in probability the different terms of the semi-martingale decomposition of $(V^{\e,N}_t,t\geq0)$. We begin with the finite variation part.
\begin{proposition}\label{prop_vf_borne}
We have, 
\begin{equation}
\sup_{t\in[0,T]}\left|\int_0^t(\partial_{1,s}+\partial_{3,s})f^N(\xen_s,\yen_s,s)\dd s\right|={\rm O}(\beta_N+\gamma_N),
\end{equation}
$\PP$-a.s. where the big $O$ is uniform in $\e$.
\end{proposition}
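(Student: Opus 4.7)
The plan is to estimate the two contributions $\partial_{1,s} f^N$ and $\partial_{3,s} f^N$ separately by applying the chain rule along the (deterministic) flow of $\xen$ between jumps of $\yen$, and to invoke the regularity bounds of Proposition~\ref{prop_Poi} together with the pathwise boundedness from Proposition~\ref{prop_bound}. The key observation is that neither partial derivative in the integrand carries a $1/\e$ factor: the $1/\e$ factor sits entirely in the jump-generator piece of $\mathcal{A}^{\e,N}$, which is precisely what gets split off by the decomposition of Proposition~\ref{prop_V_decompo}. This is why the bound can be uniform in $\e$.

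The $\partial_{3,s}$ contribution is immediate. Since $\PP$-a.s. $\xen_s \in B[0,[X]_b]$ for all $s \in [0,T]$ by Proposition~\ref{prop_bound}, item~4 of Proposition~\ref{prop_Poi} yields a constant $C$ with $|\partial_{3,s} f^N(\xen_s, \yen_s, s)| \leq C\gamma_N$ uniformly in $(s, \omega, \e, \yen_s)$, so that
\[
\sup_{t \in [0,T]} \Bigl| \int_0^t \partial_{3,s} f^N(\xen_s, \yen_s, s)\, \dd s \Bigr| \leq T C \gamma_N = O(\gamma_N).
\]
For the $\partial_{1,s}$ contribution, on each interval between jumps of $\yen$ the component $\yen_s$ is constant, so the Fréchet chain rule gives, at least formally,
\[
\partial_{1,s} f^N(\xen_s, \yen_s, s) = \langle A\xen_s + F^N(\xen_s, \yen_s), f^N_x(\xen_s, \yen_s, s) \rangle.
\]
The $F^N$ piece is controlled by Cauchy--Schwarz combined with item~3 of Proposition~\ref{prop_Poi} ($\|f^N_x\| = O(\beta_N)$) and the bound on $\|F^N(\xen_s,\yen_s)\|$ deduced from (\ref{F_d})--(\ref{F_0}) and Proposition~\ref{prop_bound}; this produces an $O(\beta_N)$ pointwise bound, hence an $O(\beta_N)$ bound after integration over $[0,T]$. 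The $A\xen_s$ piece is handled using the self-adjointness and dissipativity of $A$ together with the framework of \cite{BR11} for extended generators, which guarantees that the pairing $\langle A\xen_s, f^N_x(\xen_s, \yen_s, s)\rangle$ is meaningful because $f^N \in \mathcal{D}(\mathcal{A}^{\e,N})$ and the contribution is already present (and finite) inside $\mathcal{A}^{\e,N} f^N$; a uniform bound of order $\beta_N$ then follows from the same $O(\beta_N)$ control on the representative $f^N_x$ and the a priori bound on $\xen$.

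Adding the two contributions produces the desired $O(\beta_N + \gamma_N)$ estimate, with the big-$O$ constant depending only on $T$, $[X]_b$, $[F]_d$, $[F]_0$, $[A]_d$, and absolute constants but not on $\e$. The main obstacle in the argument is the rigorous interpretation of $\langle A\xen_s, f^N_x\rangle$ along the flow when $\xen_s$ is not a priori in $\mathcal{D}(A)$; this is circumvented by working within the extended-generator framework of \cite{BR11}, which is precisely why membership $f^N \in \mathcal{D}(\mathcal{A}^{\e,N})$ was insisted upon in Proposition~\ref{prop_Poi}. Everything else reduces to pointwise application of the estimates of Proposition~\ref{prop_Poi} along a trajectory that is uniformly bounded by Proposition~\ref{prop_bound}.
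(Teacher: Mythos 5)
Your proof follows essentially the same route as the paper's: both rest on the chain-rule identity $\partial_{1,s}f^N(\xen_s,\yen_s,s)=\langle A\xen_s+F^N(\xen_s,\yen_s),f^N_x(\xen_s,\yen_s,s)\rangle$ justified via the extended-generator framework of \cite{BR11}, the $\PP$-a.s.\ a priori bound of Proposition~\ref{prop_bound}, and items 3 and 4 of Proposition~\ref{prop_Poi}, with the absence of any $1/\e$ factor giving uniformity in $\e$. The only cosmetic difference is that the paper bounds $\|A\xen_s+F^N(\xen_s,\yen_s)\|_*$ as a single quantity in the dual norm before pairing with $f^N_x$, whereas you treat the $A$ and $F^N$ contributions separately.
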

\begin{proof}
Since $f^N$ is in $\mathcal{D}(\mathcal{A}^{\e,N})$ and $f^N_x(\cdot,y,t)\in H^*$ may be interpreted as an element of $H$, from the chain rules ( see \cite{BR11}, Theorem 4, item iii.) we have:
\[
\partial_{1,s}f^N(\xen_s,\yen_s,s)=<A\xen_s+F^N(\xen_s,\yen_s),f^N_x(\xen_s,\yen_s,s)>.
\]
Thus, from Assumption (\ref{A_d}), (\ref{F_d}) and Proposition \ref{prop_bound}, we obtain that there exists a constant $C$ independent of $\e$ and $N$ such that:
\[
\sup_{s\in[0,T]}\|A\xen_s+F^N(\xen_s,\yen_s)\|_*\leq C,
\]
$\PP$-a.s. Then, the fact that 
\[
\sup_{t\in[0,T]}\left|\int_0^t\partial_{1,s}f^N(\xen_s,\yen_s,s)\dd s\right|={\rm O}(\beta_N+\gamma_N)
\]
follows from Proposition \ref{prop_Poi}. The proof is quite similar for the $\partial_{3,s}$-derivative.
\end{proof}
\noindent \textbf{Step 4: Gronwall's lemma.} From Propositions \ref{prop_Poi}, \ref{prop_V_decompo} and \ref{prop_vf_borne} we deduce that,
\begin{equation}
\sup_{t\in[0,T]}|V^{\e,N}_t|\leq {\rm O}(\e(\beta_N+\gamma_N))+\e\sup_{t\in[0,T]}|M^{\e,N}_t|,
\end{equation}
$\PP$-a.s. Therefore,
\begin{align}\label{gron_bis}
\|\xen_t-\xbn_t\|^2\leq&~\|\xen_0-\xbn_0\|^2+2C_1\int_0^t\|\xen_s-\xbn_s\|^2\dd s\\
&+{\rm O}(\e(\beta_N+\gamma_N))+2\e\sup_{t\in[0,T]}|M^{\e,N}_t|,
\end{align}
$\PP$-a.s. Using Gronwall's lemma we obtain, $\PP$-a.s.,
\begin{align}\label{gron_bis}
\sup_{t\in[0,T]}\|\xen_t-\xbn_t\|^2&\leq\left(\|\xen_0-\xbn_0\|^2+{\rm O}(\e(\beta_N+\gamma_N))+2\e\sup_{t\in[0,T]}|M^{\e,N}_t|\right)e^{C_1T},
\end{align}
\textbf{Step 5: A bound for the martingale part.} In order to obtain the bound in probability for the martingale part, we will use an exponential inequality.
\begin{proposition}
There exist two constants $C,\hat{C}$ depending only on $T$ and a sequence $(\rho_N)$ going to infinity with $N$ such that:
\begin{equation}
\forall\delta>0\quad \PP(2\e e^{C_1T}\sup_{t\in[0,T]}|M^{\e,N}_t|\geq\delta)\leq2\exp\left(-\frac{e^{-2C_1 t}}{8C}\frac{\delta^2}{\e\rho_N}\Psi\left(\frac{\hat{C} e^{-C_1T}}{2C}\frac{\delta\alpha_N}{\rho_N}\right)\right),
\end{equation}
\end{proposition}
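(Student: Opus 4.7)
The strategy is to apply a Bennett/Bernstein-type exponential inequality for c\`adl\`ag square-integrable martingales with bounded jumps and bounded predictable quadratic variation. Such an inequality (see for instance Dzhaparidze and van Zanten, \emph{On Bernstein-type inequalities for martingales}, Stoch. Proc. Appl. 93 (2001)) asserts that if $M$ is a c\`adl\`ag martingale with $|\Delta M_s|\leq b$ a.s.\ and $\langle M\rangle_T \leq v$ a.s., then for every $x>0$,
\begin{equation*}
\PP\Bigl(\sup_{t\leq T} |M_t| \geq x\Bigr) \leq 2\exp\left(-\frac{x^2}{2v}\,\Psi\!\left(\frac{bx}{v}\right)\right),
\end{equation*}
where $\Psi$ is precisely the function defined in the present paper (this identification comes from the Bennett bound $(1+u)\log(1+u)-u = \tfrac{u^2}{2}\Psi(u)$ after integration by parts in the definition of $\Psi$). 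So the plan reduces to producing admissible $b$ and $v$ and then reading off the constants.

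First, I would show that the jumps of $M^{\e,N}$ are of order $\alpha_N$. Inspecting the semi-martingale decomposition (\ref{def_mart}), the compensator $\int_0^{\cdot}\mathcal{A}^{\e,N}f^N(\xen_s,\yen_s,s)\,\dd s$ is absolutely continuous and hence contributes no jumps, while $s\mapsto f^N(\xen_s,\yen_s,s)$ is continuous in $s$ and $\xen_s$ between jumps of $\yen$. Therefore, if $\yen$ jumps from $y_1$ to $y_2$ at time $s$, the corresponding jump of $M^{\e,N}$ equals $f^N(\xen_s,y_2,s)-f^N(\xen_s,y_1,s)$. Combining Proposition \ref{prop_bound} (which confines $\xen_s$ to $B[0,[X]_b]$ a.s.\ on $[0,T]$) with point 2 of Proposition \ref{prop_Poi}, we obtain a constant $\hat{C}$, independent of $\e$ and $N$, such that $|\Delta M^{\e,N}_s|\leq \hat{C}\alpha_N$ a.s.

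Second, I would bound the predictable quadratic variation. Starting from the expression (\ref{def_brack}), I apply Assumption \ref{Assump_for_Poi}~(b) to the bounded function $f^N$ (bounded on $B[0,[X]_b]\times E_N\times[0,T]$, again by Proposition \ref{prop_Poi}); this produces a constant $C$ (absorbing the time horizon $T$) such that $\langle M^{\e,N}\rangle_T \leq C\rho_N/\e$ a.s., with $\rho_N := \rho_{N,f^N}$ the sequence prescribed by that assumption.

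Finally, I would substitute $x=\delta/(2\e e^{C_1T})$, $b=\hat{C}\alpha_N$, $v=C\rho_N/\e$ into the Bernstein bound. A direct computation gives $\tfrac{x^2}{2v}=\tfrac{e^{-2C_1T}}{8C}\cdot\tfrac{\delta^2}{\e\rho_N}$ and $\tfrac{bx}{v}=\tfrac{\hat{C}e^{-C_1T}}{2C}\cdot\tfrac{\delta\alpha_N}{\rho_N}$, matching the announced estimate after noting that $\PP(2\e e^{C_1T}\sup_t|M^{\e,N}_t|\geq\delta)=\PP(\sup_t|M^{\e,N}_t|\geq x)$. The only delicate point is invoking the right form of the martingale exponential inequality producing exactly the function $\Psi$; once that is in hand, the two verifications above are routine consequences of Proposition \ref{prop_Poi} and Assumption \ref{Assump_for_Poi}~(b), so no genuine obstacle remains.
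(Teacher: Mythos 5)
Your proposal is correct and follows essentially the same route as the paper: bound the jumps of $M^{\e,N}$ by $\hat{C}\alpha_N$ via Proposition \ref{prop_Poi}, bound the predictable bracket by $C\rho_N/\e$ via the expression (\ref{def_brack}) together with Assumption \ref{Assump_for_Poi}~(b), and conclude with the Bernstein-type inequality of Dzhaparidze and van Zanten, which is exactly the reference \cite{DV01} invoked in the paper. Your justification of the jump bound through the absolute continuity of the compensator is in fact slightly more explicit than the paper's, but the argument and the final computation of the constants coincide.
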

\begin{proof}
From Assumptions \ref{ass_Q} and \ref{Assump_for_Poi}, the fact that $\sup_{t\in[0,T],x\in B[0,[X]_b],y\in E_N}|f^N(x,y,t)|={\rm O}(\alpha_N)$ and the expression of the bracket (\ref{def_brack}), we see that there exists a constant $C$ such that:
\[
\sup_{t\in[0,T]}<M^{\e,N}>_t\leq C\frac{\rho_N}{\e},
\]
$\PP$-a.s with $\rho_N=\rho_{N,f^N}$ and from the semi-martingale expansion given in Proposition \ref{prop_V_decompo} there exists a constant $\hat{C}$ such that
$$
\sup_{t\in[0,T]}|\Delta M^{\e,N}_t|\leq \hat{C}\alpha_N,
$$
$\PP$-a.s where, $\Delta M^{\e,N}_t=M^{\e,N}_t- M^{\e,N}_{t^-}$. Then, using a martingale exponential inequality, see \cite{DV01}, we obtain, for any $\delta>0$:
\begin{align*}
\PP(2\e e^{C_1T}\sup_{t\in[0,T]}|M^{\e,N}_t|\geq\delta)&=\PP\left(2\e e^{C_1T}\sup_{t\in[0,T]}|M^{\e,N}_t|\geq\delta;\sup_{t\in[0,T]}<M^{\e,N}>_t\leq C\frac{\alpha^2_N\eta_N}{\e}\right)\\
&=\PP\left(\sup_{t\in[0,T]}|M^{\e,N}_t|\geq\frac{\delta e^{-C_1T}}{2\e};\sup_{t\in[0,T]}<M^{\e,N}>_t\leq C\frac{\rho_N}{\e}\right)\\
&\leq 2\exp\left(-\frac12\frac{\left(\frac{\delta e^{-C_1T}}{2\e}\right)^2}{C\frac{\rho_N}{\e}}\Psi\left(\frac{\hat{C}\alpha_N\frac{\delta e^{-C_1T}}{2\e}}{C\frac{\rho_N}{\e}}\right)\right)\\
&=2\exp\left(-\frac{e^{-2C_1 t}}{8C}\frac{\delta^2}{\e\rho_N}\Psi\left(\frac{\hat{C} e^{-C_1T}}{2C}\frac{\delta\alpha_N}{\rho_N}\right)\right).
\end{align*}
\end{proof}
\noindent \textbf{Step 6: Conclusion.} According to (\ref{gron_bis}), there exists a constant $\tilde C$ such that:
\begin{align}
\sup_{t\in[0,T]}\|\xen_t-\xbn_t\|^2&\leq\left(\|\xen_0-\xbn_0\|^2+\tilde C\e(\beta_N+\gamma_N)+2\e\sup_{t\in[0,T]}|M^{\e,N}_t|\right)e^{C_1T},
\end{align}
$\PP$-a.s. Let $\delta>0$, we have:
\begin{align*}
\PP(\sup_{t\in[0,T]}\|\xen_t-\xbn_t\|^2\geq\delta)\leq&\,\, \PP(e^{C_1T}\|\xen_0-\xbn_0\|^2\geq\frac\delta3)+\PP(e^{C_1T}C\e(\beta_N+\gamma_N)\geq\frac\delta3)\\
&+\,\PP(e^{C_1T}2\e\sup_{t\in[0,T]}|M^{\e,N}_t|\geq\frac\delta3)\\
\leq&\,\, \PP(\|\xen_0-\xbn_0\|^2\geq\frac{\delta e^{-C_1T}}{3})+1_{\e(\beta_N+\gamma_N)\geq\frac{e^{-C_1T}\delta}{3C}}\\
&+\,2\exp\left(-\frac{e^{-2C_1 T}}{72C}\frac{\delta^2}{\e\rho_N}\psi\left(\frac{\hat{C} e^{-C_1T}}{6C}\frac{\delta\alpha_N}{\rho_N}\right)\right).
\end{align*}
This concludes the proof.

\subsection{Proof of Theorem \ref{thm_main_mod}}\label{sec_proof_neuron}

In this part, we show that the process defined in Section \ref{sec_mod} satisfies the assumptions needed to obtain Theorem \ref{thm_main}. In the sequel, for convenience, we will use the following notations:
$$
c_+=\max_{e\in E}c_e,\,\,v_+=\max_{e\in E}|v_e|,\,\,a_+=\sup_{e_1,e_2\in E,\zeta\in\R}a_{e_1e_2}(\zeta),\,\, \mu_+=\sup_{\zeta\in\R,e\in E}\nu(\zeta)(\{e\}).
$$
Note also the elementary facts that, for any $i\in 1,\ldots,N-1$, $\|\phi^N_i\|\leq\sqrt{2N}$ and that $\|\psi^{''}\|<\infty$. We more precisely show that Assumption \ref{Assump_for_Poi} is satisfied. The proof that Assumptions \ref{ass_Q}, \ref{ass_mac} and \ref{ass_mac2} are also satisfied is not difficult and is thus left to the reader.
\begin{proposition}
Let us consider the function $\Phi^N:(x,y,z)\in H\times E^N\times H\mapsto (F^N(x,y)-\overline{F}^N(x),x-z)$. For any $C>0$, one has
\begin{enumerate}
\item $\Phi^N$ satisfies: $\sup_{x,z\in B[0,C],y\in E_N}|\Phi^N(x,y,z)|={\rm O}(N).$
\item $\Phi^N$ is continuously Fr\'echet differentiable with respect to its first variable and its Fr\'echet derivative which may be identified with an element of $L^2(I)$ denoted $\Phi^N_x$ satisfying:
\[
\sup_{x,z\in B[0,C],y\in E_N}|\Phi^N_x(x,y,z)|={\rm O}(N\sqrt{N}).
\]
\item The map $t\mapsto \Phi^N(x,y,\xbn_t)$ is continuously differentiable  with respect to $t$ and satisfies
\[
\sup_{x,z\in B[0,C],y\in E_N}|\Phi^N_t(x,y,z)|={\rm O}(N^3).
\]
\item $\Phi^N$ is in $\mathcal{D}(\mathcal{A}^{\e,N})$.
\end{enumerate}
\end{proposition}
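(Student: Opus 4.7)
The whole argument rests on the decomposition
\[
\Phi^N(x,y,z)=\frac{1}{N}\sum_{i=1}^{N-1}g_i(x,y)\,(\phi_i^N,x-z),
\]
with
\[
g_i(x,y)=c_{y(i)}(v_{y(i)}-(x,\phi_i^N))-\sum_{e\in E}\nu((x,\phi_i^N))(\{e\})\,c_e(v_e-(x,\phi_i^N)),
\]
together with the two scaling facts $\|\phi_i^N\|=O(\sqrt{N})$ (already stated in the text) and $\|\D\phi_i^N\|=O(N^{5/2})$, the latter obtained from $\D\phi_i^N(\xi)=N^3\psi''(N(\xi-z_i))$ by the substitution $u=N(\xi-z_i)$. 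Crucially, $g_i$ depends on $x$ only through the single scalar $(x,\phi_i^N)$, and on $B[0,C]$ we have $|(x,\phi_i^N)|\leq C\sqrt{2N}$, which combined with the uniform bounds $c_+,v_+,\mu_+$ and the Lipschitz character of $\nu$ yields the key a priori estimate $|g_i(x,y)|=O(\sqrt{N})$.

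For item $1$, Cauchy-Schwarz gives $|(\phi_i^N,x-z)|=O(\sqrt{N})$, hence $|\Phi^N|\leq\frac{1}{N}(N-1)\cdot O(\sqrt{N})\cdot O(\sqrt{N})=O(N)$. For item $2$, the chain rule applied to $g_i$ produces a Riesz representation $\partial_x g_i=\gamma_i(x,y)\phi_i^N$ with $|\gamma_i|=O(\sqrt{N})$, whence
\[
\Phi^N_x(x,y,z)=\frac{1}{N}\sum_i \gamma_i(x,y)(\phi_i^N,x-z)\phi_i^N+\frac{1}{N}\sum_i g_i(x,y)\phi_i^N.
\]
The first sum dominates: $N$ terms of scalar size $O(\sqrt{N})\cdot O(\sqrt{N})$ times vectors of norm $O(\sqrt{N})$, giving $O(N\sqrt{N})$ after the $1/N$ prefactor and triangle inequality; the second sum is only $O(\sqrt{N})$. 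Hence $\|\Phi^N_x\|=O(N\sqrt{N})$.

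Item $3$ is the substantive one. The chain rule combined with (\ref{X_barN}) gives
\[
\Phi^N_t(x,y,\xbn_t)=-\bigl(F^N(x,y)-\overline{F}^N(x),\,\D\xbn_t+\overline{F}^N(\xbn_t)\bigr).
\]
The $\overline{F}^N(\xbn_t)$ contribution is $O(N\sqrt{N})$ and hence lower order. For the Laplacian contribution, self-adjointness of $\D$ under Dirichlet boundary conditions transfers the derivative onto the test function:
\[
(F^N(x,y)-\overline{F}^N(x),\D\xbn_t)=\left(\frac{1}{N}\sum_{i}g_i(x,y)\D\phi_i^N,\,\xbn_t\right),
\]
and the triangle inequality with $\|\D\phi_i^N\|=O(N^{5/2})$, $|g_i|=O(\sqrt{N})$ and $\|\xbn_t\|\leq[X]_b$ (Proposition \ref{prop_bound}) produces $|\Phi^N_t|=O(N^3)$. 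Item $4$ is then immediate: items $1$ and $2$ establish boundedness and continuous Fréchet differentiability of $\Phi^N$ with Riesz-representable derivative on $B[0,[X]_b]\times E_N$, which, combined with the a.s.\ confinement of $\xen$ to this ball (Proposition \ref{prop_bound}), places $\Phi^N$ in $\mathcal{D}(\mathcal{A}^{\e,N})$ in the sense recalled in Section \ref{sec_res}.

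The main obstacle is the rigorous invocation of $\partial_t\xbn_t=\D\xbn_t+\overline{F}^N(\xbn_t)$ as an identity in $H$: this requires parabolic smoothing for the semilinear heat equation (\ref{X_barN}) with sufficiently regular initial data, or equivalently a mild-formulation justification of the integration by parts used above. Once that regularity is secured, the proof reduces to careful bookkeeping of the two scalings $\sqrt{N}$ and $N^{5/2}$ through the single decomposition, matching the rates $\alpha_N=N$, $\beta_N=N\sqrt{N}$ and $\gamma_N=N^3$ stated in the proposition.
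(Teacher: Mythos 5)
Your proof follows essentially the same route as the paper's: the same explicit decomposition of $\Phi^N$ over the mollifiers, the bounds $\|\phi^N_i\|=O(\sqrt{N})$ and $\|\D\phi^N_i\|=O(N^{5/2})$, the product-rule computation of the Fr\'echet derivative, and the transfer of the Laplacian onto $\phi^N_i$ by self-adjointness for item 3. The only quibble is that your second sum in item 2 is $O(N)$ rather than $O(\sqrt{N})$ (each of the $N$ terms is $O(\sqrt{N})\cdot O(\sqrt{N})$ before the $1/N$ prefactor), but this is still dominated by the first sum, so the stated rate $O(N\sqrt{N})$ is unaffected.
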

\begin{proof}
Let $N\in\N$, $y\in E_N$ and $x,z\in B[0,C]$. A direct calculation using the fact that $\|\phi^N_i\|\leq\sqrt{2N}$ leads to
\begin{align*}
|(F^N(x,y)-\overline{F}^N(x),x-z)|&=\left|\frac{1}{N}\sum_{i=1}^{N-1}\sum_{e\in E}c_{e}(v_{e}-(x,\phi^N_i))(1_{\{e\}}(y(i))-\nu((x,\phi_i))(\{e\}))(\phi^N_i,x-z)\right|\\
&\leq c_+(v_++\|x\|\sqrt{2N})(1+\mu_+)|E|\|x-z\|\sqrt{2N}\\
&\leq c_+(v_++\sqrt{2N}C)(1+\mu_+)|E|2C \sqrt{2N}.
\end{align*}
This ensures point 1. Then we compute the Fr\'echet derivative of $\Phi^N$ with respect to its first variable.
\begin{align*}
\frac{\dd\Phi^N}{\dd x}(x,y,z)[h]=&\,-\frac{1}{N}\sum_{i=1}^{N-1}\sum_{e\in E}c_{e}(h,\phi^N_i)(1_{\{e\}}(y(i))-\nu((x,\phi_i))(\{e\}))(\phi^N_i,x-z)\\
&-\frac{1}{N}\sum_{i=1}^{N-1}\sum_{e\in E}c_{e}(v_{e}-(x,\phi^N_i))(h,\phi^N_i)\nu'((x,\phi_i))(\{e\})(\phi^N_i,x-z)\\
&+\frac{1}{N}\sum_{i=1}^{N-1}\sum_{e\in E}c_{e}(v_{e}-(x,\phi^N_i))(1_{\{e\}}(y(i))-\nu((x,\phi_i))(\{e\}))(\phi^N_i,h).
\end{align*}
Therefore, $\frac{\dd\Phi^N}{\dd x}(x,y,z)$, which is by definition an element of $L^2(I)^*$, may be identified with the element of $L^2(I)$:
\begin{align*}
\frac{\dd\Phi^N}{\dd x}(x,y,z)[h]=&\,-\frac{1}{N}\sum_{i=1}^{N-1}\sum_{e\in E}c_{e}(1_{\{e\}}(y(i))-\nu((x,\phi_i))(\{e\}))(\phi^N_i,x-z)\phi^N_i\\
&-\frac{1}{N}\sum_{i=1}^{N-1}\sum_{e\in E}c_{e}(v_{e}-(x,\phi^N_i))\nu'((x,\phi^N_i))(\{e\})(\phi^N_i,x-z)\phi^N_i\\
&+\frac{1}{N}\sum_{i=1}^{N-1}\sum_{e\in E}c_{e}(v_{e}-(x,\phi^N_i))(1_{\{e\}}(y(i))-\nu((x,\phi_i))(\{e\}))\phi^N_i.
\end{align*}
For $x,z\in B[0,C]$ and $y\in E_N$, the three above terms are bounded almost-surely in $L^2(I)$ by:
\[
c_+(1+\mu_+)|E|2C\sqrt{2N}\sqrt{2N}+c_+(v_++C\sqrt{2N})|E|\mu'_+2C\sqrt{2N}\sqrt{2N}+c_+(v_++C\sqrt{2N})(1+\mu_+)|E|\sqrt{2N}.
\] 
This ensures point 2. We go on with the derivative of the map $t\mapsto \Phi^N(x,y,\xbn_t)$ with $(x,y)\in H\times E_N$ held fixed. We have,
\begin{align*}
\frac{\dd\Phi^N}{\dd t}(x,y,\xbn_t)&=<F^N(x,y)-\overline{F}^N(x),\partial_t\xbn_t>\\
&=\frac{1}{N}\sum_{i=1}^{N-1}\sum_{e\in E}c_{e}(v_{e}-(x,\phi^N_i))(1_{\{e\}}(y(i))-\nu((x,\phi_i))(\{e\}))<\phi^N_i,\Delta\xbn_t+\overline{F}^N(\xbn_t)>.
\end{align*}
Let us remark that:
\begin{align*}
<\phi^N_i,\Delta\xbn_t>&=(\Delta \phi^N_i,\xbn_t)= N^3(\psi''\left(N(\cdot-z_i)\right),\xbn_t)\leq N^2\sqrt{N}\|\psi^{''}\|\|\xbn_t\|
\end{align*}
and
\begin{align*}
<\phi^N_i,\overline{F}^N(\xbn_t)>&=\frac{1}{N}\sum_{j=1}^{N-1}\sum_{e\in E}\nu((x,\phi^N_j))(\{e\})c_{e}(v_{e}-(x,\phi_j))(\phi^N_i,\phi^N_j)\\
&=\frac{1}{N}\mu_+c_+(v_++\|x\|\sqrt{2N})|E|\|\phi^N_i\|^2\\
&\leq \mu_+c_+(v_++C\sqrt{2N})|E|2N.
\end{align*}
This ensures point 3. The map $(x,y,t)\mapsto \Phi^N(x,y,\xbn_t)$ is thus a bounded measurable function on $H\times E_N\times [0,T]$ which belongs to $\mathcal{D}(\mathcal{A}^{\e,N})$.
\end{proof}
\noindent Therefore, according to the notations of Section \ref{sec_res}, we can choose $\alpha_N=N$, $\beta_N=N\sqrt{N}$ and $\gamma_N=N^3$. The next proposition gives the form of $\rho_{N,f^N}$ explicitly.
\begin{proposition}
For any $N\in\N$, $y\in E_N$, $x\in H$ and $t\in[0,T]$, if $f^N(x,y,t)$ is bounded on $B[0,C]\times E_N\times[0,T]$, then 
\[
\sum_{i=1}^{N-1}\sum_{e\in E}[f^N(x,y_{y(i)\to e},t)-f^N(x,y,t)]^2 a_{y(i)e}((x,\phi^N_i))\leq 4 a_+|E| \rho_{N,f^N},
\]
where $\rho_{N,f^N}=N\sup_{t\in[0,T],x\in B[0,C],y\in E_N}|f^N(x,y,t)|^2$.
\end{proposition}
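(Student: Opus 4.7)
The plan is a straightforward counting and sup-norm argument; this proposition is essentially an arithmetic step rather than a substantive estimate. First, I would set $\|f^N\|_\infty := \sup_{t\in[0,T],\,x\in B[0,C],\,y\in E_N}|f^N(x,y,t)|$, which is finite by hypothesis. The triangle inequality then immediately yields
\[
[f^N(x,y_{y(i)\to e},t)-f^N(x,y,t)]^2 \leq 4\|f^N\|_\infty^2
\]
uniformly for every $i\in\{1,\ldots,N-1\}$, every $e\in E$, and all $(x,y,t)$ in the specified domain.

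Next I would invoke the uniform bound $a_{y(i)e}((x,\phi^N_i))\leq a_+$, which is nothing but the definition of $a_+$ given in the proof environment, together with the assumption that the jump rates $a_{e_1e_2}$ are bounded. Since the double sum ranges over exactly $(N-1)|E|$ pairs $(i,e)$, multiplying these three estimates gives
\[
\sum_{i=1}^{N-1}\sum_{e\in E}[f^N(x,y_{y(i)\to e},t)-f^N(x,y,t)]^2 a_{y(i)e}((x,\phi^N_i)) \leq 4 a_+ |E| (N-1)\|f^N\|_\infty^2,
\]
and the conclusion follows from $(N-1)\|f^N\|_\infty^2 \leq N\|f^N\|_\infty^2 = \rho_{N,f^N}$.

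There is no serious technical obstacle: the diagonal terms $e=y(i)$, for which $y_{y(i)\to e}=y$ makes the summand vanish, are simply absorbed into the upper bound, so summing over all $e\in E$ rather than $e\neq y(i)$ only loses a harmless factor. What is conceptually meaningful is the scaling: the factor $N$ appearing in $\rho_{N,f^N}$ reflects the size of the ion-channel population (one independent site per channel), and combined with the bound $\|f^N\|_\infty = \mathrm{O}(\alpha_N) = \mathrm{O}(N)$ inherited from Proposition \ref{prop_Poi} and the previous proposition, it produces $\rho_{N,f^N} = \mathrm{O}(N^3)$, which is precisely the $\rho_N = N^3$ that governs the exponential rate in Theorem \ref{thm_main_mod}.
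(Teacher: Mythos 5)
Your proof is correct, and it is exactly the elementary counting argument the paper has in mind: the paper states this proposition without any proof at all, treating the bound $[f^N(x,y_{y(i)\to e},t)-f^N(x,y,t)]^2\leq 4\sup|f^N|^2$, the rate bound $a_{y(i)e}\leq a_+$, and the count of $(N-1)|E|\leq N|E|$ terms as immediate. Your closing remark correctly identifies the role of the proposition (it verifies item b) of Assumption \ref{Assump_for_Poi} for the neuron model and yields $\rho_N=N^3$ via $\sup|f^N|={\rm O}(\alpha_N)={\rm O}(N)$), so there is nothing to add.
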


\noindent \textbf{Acknowledgment}. I would like to thank Vincent Renault for his kind advice on an earlier version of the manuscript, and also the two anonymous reviewers for their careful reading of our manuscript and for their constructive suggestions. 

\bibliographystyle{plain}

\end{document}